\theoremstyle{plain}
\newtheorem{theorem}{Theorem}[section]
\newtheorem{corollary}[theorem]{Corollary}
\newtheorem{lemma}[theorem]{Lemma}
\newtheorem{proposition}[theorem]{Proposition}
\newtheorem{conjecture}[theorem]{Conjecture}
\theoremstyle{definition}
\newtheorem{definition}[theorem]{Definition}
\newtheorem{remark}[theorem]{Remark}
\newtheorem{example}[theorem]{Example}
\numberwithin{equation}{section}
\newcommand{\Z}{\mathbb{Z}}
\newcommand{\PP}{\mathbb{P}}
\newcommand{\R}{\mathbb{R}}
\newcommand{\Q}{\mathbb{Q}}
\newcommand{\N}{\mathbb{N}}
\newcommand{\sM}{\mathcal{M}}
\newcommand{\CC}{\mathbb C}
\newcommand{\mldeg}{\mathrm{mldeg}}
\DeclareMathOperator{\conv}{conv}
\date{}
\begin{document}
\title{\textbf{Maximum Likelihood Estimation of \\ Toric Fano Varieties}}
\author{Carlos Am\'endola \and Dimitra Kosta \and Kaie Kubjas}

 \maketitle

\begin{abstract}
We study the maximum likelihood estimation problem for several classes of toric Fano models. We start by exploring the maximum likelihood degree for all $2$-dimensional Gorenstein toric Fano varieties. We show that the ML degree is equal to the degree of the surface in every case except for the quintic del Pezzo surface with two ordinary double points and provide explicit expressions that allow one to compute the maximum likelihood estimate in closed form whenever the ML degree is less than 5. We then explore the reasons for the ML degree drop using $A$-discriminants and intersection theory. Finally, we show that toric Fano varieties associated to 3-valent phylogenetic trees have ML degree one and provide a formula for the maximum likelihood estimate. We prove it as a corollary to a more general result about the multiplicativity of ML degrees of codimension zero toric fiber products, and it also follows from a connection to a recent result about staged trees.
\end{abstract}

\section{Introduction}

Maximum likelihood estimation (MLE) is a standard approach to parameter estimation, and a fundamental computational task in statistics. Given observed data and a model of interest, the maximum likelihood estimate is the set of parameters that is most likely to have produced the data. Algebraic techniques have been developed for the computation of maximum likelihood estimates for algebraic statistical models~\cite{agostini2015maximum, allman2019maximum,gross2014maximum,
Hauenstein_etal,Hosten_etal}.
\let\thefootnote\relax\footnote{The authors would like to thank Ivan Cheltsov, Alexander Davie, Martin Helmer, Milena Hering, Steffen Lauritzen, Anna Seigal, Bernd Sturmfels, Hendrik Suess and Seth Sullivant for valuable comments and suggestions. Part of this work was completed, while D. Kosta was supported by a Daphne Jackson Trust Fellowship funded jointly by EPSRC and the University of Edinburgh. C.~Am\'endola was partially supported by the Deutsche Forschungsgemeinschaft (DFG) in the context of the Emmy Noether junior research group KR 4512/1-1. K.~Kubjas was supported by the European Union's Horizon 2020 research and innovation programme: Marie Sk\l{}odowska-Curie grant agreement No. 748354, research carried out at LIDS, MIT and Team PolSys, LIP6, Sorbonne University.}

The \textit{maximum likelihood degree} (ML degree) of an algebraic statistical model is the number of complex critical points of the likelihood function over the Zariski closure of the model~\cite{Catanese_etal}. It measures the complexity of the maximum likelihood estimation problem on a model.  In \cite{Hosten_etal}, an algebraic algorithm is presented for computing all critical points of the likelihood function, with the aim of identifying the local maxima in the probability simplex. In the same article, an explicit formula for the ML degree of a projective variety which is a generic complete intersection is derived and this formula serves as an upper bound for the ML degree of special complete intersections. Moreover, a geometric characterisation of the ML degree of a smooth variety in the case when the divisor corresponding to the rational function is a normal crossings divisor is given in \cite{Catanese_etal}. In the same paper an explicit combinatorial formula for the ML degree of a toric variety is derived by relaxing the restrictive smoothness assumption and allowing some mild singularities. For an introduction to the geometry behind the MLE for algebraic statistical models for discrete data the interested reader is refered to \cite{Huh_Sturmfels}, which includes most of the current results on the MLE problem from the perspective of algebraic geometry as well as statistical motivation.

This article is concerned with the problem of MLE on toric Fano varieties. Toric varieties correpond to log-linear models in statistics. Since the seminal papers by L.A. Goodman in the 70s \cite{Goodman_71, Goodman_73}, log-linear models have been widely used in statistics and areas like natural language processing when analyzing crossclassified data in multidimensional contingency tables \cite{Bishop_etal}. The ML degree of a toric variety is bounded above by its degree. Toric Fano varieties provide several interesting classes of toric varieties for investigating the ML degree drop. We  focus on studying the maximum likelihood estimation for $2$-dimensional Gorenstein toric Fano varieties, the Veronese $(2,2)$ with different scalings and toric Fano varieties associated to 3-valent phylogenetic trees.

Two-dimensional Gorenstein toric Fano varieties correspond to reflexive polygons and by the classification results there are exactly $16$ isomorphism classes of such polygons, see for example~\cite{Kreuzer_Skarke}. Out of these $16$ isomorphism classes five correspond to smooth del Pezzo surfaces and $11$ correspond to del Pezzo surfaces with singularities. Our first main result Theorem~\ref{theorem:2_dim_Gorenstein_toric_fano} states that the ML degree is equal to the degree of the surface in all cases except for the quintic del Pezzo surface with two ordinary double points. Furthermore, in Table \ref{table:closedforms}, we provide explicit expressions that allow the maximum likelihood estimate to be computed in closed form whenever the ML degree is less than five.

We also explore reasons and bounds for the ML degree drop of a toric variety building on the work of Am\'endola et al~\cite{Carlos_etal}.
The critical points of the likelihood function on a toric variety lie in the intersection of the toric variety with a  linear space of complementary dimension.  By B\'ezout's theorem, the sum of degrees of irreducible components of this intersection is bounded above by the degree of the toric variety, and hence the ML degree of a toric variety is bounded by its degree. However, not all the points in the intersection contribute towards the ML degree, i.e. the points with a zero coordinate or the sum of coordinates equal to zero are not counted towards the ML degree. In the case of the quintic del Pezzo surface with two ordinary double points, the ML degree drops by two because there are two points in the intersection of the toric variety and the linear space whose coordinates sum to zero, see Example~\ref{example:ML_degree_drop_5a}. These two points do not depend on the observed data by Lemma~\ref{lemma:removed_points_independent}. Although we do not see this phenomenon with two-dimensional Gorenstein toric Fano varieties,  the ML degree of a toric variety can drop also because the toric variety and the hyperplane intersect nontransversally, and we will see in Sections~\ref{section:ML_degree_drop} and~\ref{section:phylogenetic_models}  that this is often the case.

Buczy\'{n}ska and Wi\'{s}niewski proved that certain varieties associated to 3-valent phylogenetic trees are toric Fano varieties~\cite{Wisniewski_Buczynska_2007}. In phylogenetics, these varieties correspond to the CFN model in the Fourier coordinates. These varieties are examples of codimension zero toric fiber products as defined by Sullivant in~\cite{sullivant2007toric}. Our second main result is Theorem~\ref{theorem:ML_degree_of_toric_fiber_products} that states that the MLE, ML degree as well as critical points of the likelihood function behave multiplicatively in the case of codimension zero toric fiber product of toric ideals. As a corollary, we obtain that the ML degree of the Buczy\'{n}ska-Wi\'{s}niewski phylogenetic variety associated to a 3-valent tree is one and we get a closed form for the MLE. This result holds for the CFN model only in the Fourier coordinates, as the ML degree of the actual model in the probability coordinates  can be much higher.  We observe that the result about the CFN model in the Fourier coordinates can be alternatively deduced from the recent work of Duarte, Marigliano and Sturmfels~\cite{duarte2019discrete}, since Buczy\'{n}ska-Wi\'{s}niewski phylogenetic varieties give staged tree models. It follows from the work of Huh~\cite{Huh} and Duarte, Marigliano and Sturmfels~\cite{duarte2019discrete} that the ML estimator of a variety of ML degree one is given by a Horn map, i.e. an alternating product of linear forms of specific form,  and  such models allow a special characterization using discriminantal triples. We discuss the Horn map and the discriminantal triple  for Buczy\'{n}ska-Wi\'{s}niewski phylogenetic varieties on 3-valent trees in Example~\ref{example:phylogenetic_Huh}.

The outline of this paper is the following. In Section~\ref{section:preliminaries}, we recall preliminaries on maximum likelihood estimation, log-linear models and toric Fano varieties. In Section~\ref{section:del_pezzo}, we study the maximum likelihood estimation for two-dimensional Gorenstein toric Fano varieties. In Section~\ref{section:ML_degree_drop}, we explore the ML degree drop using $A$-discriminants and the intersection theory. Finally, Section~\ref{section:phylogenetic_models} is dedicated to phylogenetic models and codimension zero toric fiber products.

\section{Preliminaries} \label{section:preliminaries}

\subsection{Maximum likelihood estimation}
\label{MLE}

Consider the complex projective space $\mathbb{P}^{n-1}$ with coordinates $(p_1,\ldots,p_n)$. Let $X$ be a discrete random variable taking values on the state space $[n]$. The coordinate $p_i$ represents the probability of the $i$-th event $p_i = P (X = i)$ where $i=1, \ldots, n$. Therefore $p_1+\ldots+p_n =1$. The set of points in $\mathbb{P}^{n-1}$ with positive real coordinates is identified with the probability simplex
$$
\Delta_{n-1} =\{  (p_1, \ldots , p_n) \in \mathbb{R}^{n} :  p_1, \ldots , p_n \geq 0 \text{ and } p_1+\ldots+p_n =1 \} \text{ . }
$$
An algebraic statistical model $\mathcal{M}$ is the intersection of a Zariski closed subset $V \subseteq \mathbb{P}^{n-1}$  with the probability simplex $\Delta_{n-1}$. The data is given by a nonnegative integer vector $(u_1,\ldots,u_n) \in \mathbb{N}^{n}$, where $u_i$ is the number of times the $i$-th event is observed.

The maximum likelihood estimation problem aims to find a model point $p \in \mathcal{M}$ which maximizes the likelihood of observing the data $u$. This amounts to maximizing the corresponding likelihood function
$$
L_u(p_1,\ldots,p_n) = \frac{p_1^{u_1} \cdots p_n^{u_n}}{(p_1+\ldots+p_n)^{(u_1+\ldots+u_n)}}
$$
over the model $\mathcal{M}$. Statistical computations are usually implemented in the affine $n$-plane   $p_1+\ldots+p_n =1$. However, including the denominator makes the likelihood function a well-defined rational function on the projective space  $\mathbb{P}^{n-1}$, enabling one to use projective algebraic geometry to study its restriction to the variety $V$.

he likelihood function might not be concave; it can have many local maxima, making the problem of finding or certifying a global maximum difficult. In algebraic statistics, one tries to find all critical points of the likelihood function, with the aim of identifying all local maxima \cite{Catanese_etal,Hauenstein_etal,Hosten_etal}. This corresponds to solving a system of polynomial equations  called likelihood equations. These equations characterize the critical points of the likelihood function $L_u$. We recall that the number of complex solutions to the likelihood equations, which equals the number of complex critical points of the likelihood function $L_u$ over the variety $V$,  is called the maximum likelihood degree (ML degree) of the variety $V$.

\subsection{Log-linear models}
\label{toric}

In this article we are studying maximum likelihood estimation of log-linear models.  From the algebraic perspective, a log-linear model is a toric variety intersected with a probability simplex, hence log-linear models are sometimes called toric models.
The likelihood function over a log-linear model is concave, although it can have more than one complex critical point over the corresponding toric variety intersected with the plane $p_1+\ldots+p_n=1$ (there is exactly one critical point in the positive orthant). This means that in practice, algorithms like iterative proportional fitting (IPF) are used to find the MLE over a log-linear model. The closed form of the solution is in general not rational and to find its algebraic degree one needs to  compute the ML degree. It is an open problem whether there is a connection between the convergence rate of IPF and the ML degree of a log-linear model~\cite[Section~7.3]{Drton_etal}. The study of the ML degree paired with homotopy continuation methods may speed up the MLE computation with respect to IPF in certain instances, as explored in \cite[Section~8]{Carlos_etal}.

\begin{definition}
Let $A = ( a_{ij} )\in \mathbb{Z}^{(d-1) \times n}$ be an
integer matrix. The log-linear model associated to $A$ is
$$
\mathcal{M}_A=\left\{p \in \Delta_{n-1}:\log p \in \text{rowspan}(A) \right\}.
$$
\end{definition}

Alternatively, a log-linear model can be defined as the intersection of a toric variety and the probability simplex. Recall that $\theta^{a_j} := \theta_1^{a_{1,j}}  \theta_2^{a_{2,j}} \cdots \theta_d^{a_{d-1,j}}$ for $j=1,\ldots,n$.

\begin{definition} \label{def:toric_variety}
Let $A = ( a_{ij} ) \in \mathbb{Z}^{(d-1) \times n}$ be an
integer  matrix. The toric variety $V_A \subseteq \mathbb{R}^{n}$ is the Zariski closure of the image of the parametrization map
$$
\psi:(\mathbb{C}^*)^d \rightarrow (\mathbb{C}^*)^{n},\left(s,\theta_1,\ldots,\theta_{d-1} \right) \mapsto \left(s \theta^{a_1},\ldots,s\theta^{a_{n}} \right).
$$
The ideal of $V_A$ is denoted by $I_A$ and called the toric ideal associated to $A$.
\end{definition}

Often the columns of $A$  are lattice points of a lattice polytope $Q \subseteq \mathbb{R}^{d-1}$. In this case we say that $V_A$ is the toric variety corresponding to $Q$.
The log-linear model $\mathcal{M}_A$ is the intersection of the toric variety $V_A$ with the probability simplex $\Delta_{n-1}$. We omit $A$ from the notation whenever it is clear from the context. We conclude this subsection with a characterization of the MLE for log-linear models.

\begin{proposition}[Corollary 7.3.9 in~\cite{sullivant2018algebraic}] \label{Birch}
Let $A$ be a $(d-1) \times n$ nonnegative
integer  matrix  and let $u \in \mathbb{N}^{n}$ be a data vector of size
$u_+ = u_1 + \cdots+ u_n$. The maximum likelihood estimate over the model $\mathcal{M}_A$ for the data $u$ is the unique solution $\hat{p}$, if it exists, to
$$
\hat{p}_1+\ldots + \hat{p}_n=1, \quad A\hat{p} = \frac{1}{u_+} Au \quad  \text{and} \quad \hat{p} \in \mathcal{M}_A.
$$
\end{proposition}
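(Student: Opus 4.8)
The statement is Birch's theorem for log-linear models, and the plan is to show that the three listed conditions are precisely the first-order stationarity conditions for maximizing the log-likelihood over the model, to deduce that any stationary point is a global maximum by concavity, and then to prove uniqueness directly from the strict monotonicity of the logarithm. The first observation I would record is that on the affine hyperplane $\hat p_1 + \cdots + \hat p_n = 1$ the denominator of $L_u$ equals $1$, so maximizing $L_u$ over $\mathcal{M}_A$ is the same as maximizing the log-likelihood $\ell(p) = \sum_{j=1}^n u_j \log p_j$ over $\mathcal{M}_A$.

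Next I would parametrize the positive part of the model as an exponential family using the toric parametrization $\psi$ of Definition~\ref{def:toric_variety}. Writing $t_i = \log\theta_i$ and absorbing the scaling $s$ into the normalization, every positive point of $\mathcal{M}_A$ has the form $p_j(t) = \exp\!\big(\sum_i a_{ij} t_i\big)/Z(t)$, where $Z(t) = \sum_{k} \exp\!\big(\sum_i a_{ik} t_i\big)$ enforces $\sum_j p_j = 1$. Substituting yields
\[ \ell(t) = t^\top (Au) - u_+ \log Z(t). \]
A direct computation gives $\partial_{t_i} \log Z(t) = \sum_k a_{ik}\, p_k(t) = (A\,p(t))_i$, so $\nabla_t \ell = Au - u_+\, A\,p(t)$. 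Setting the gradient to zero shows that the stationary points are exactly the model points $\hat p$ with $A\hat p = \frac{1}{u_+} Au$, which together with $\hat p \in \mathcal{M}_A$ and the normalization are the three listed conditions.

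I would then argue global optimality and uniqueness separately. For optimality, $\log Z$ is the cumulant generating function of the columns of $A$: its Hessian is the covariance matrix of these columns under $p(t)$, hence positive semidefinite, so $\ell$ is concave in $t$. Consequently any stationary point is a global maximum, so any $\hat p$ solving the system is an MLE, while conversely any MLE is a critical point and hence solves the system. For uniqueness, suppose $\hat p$ and $\hat p'$ both satisfy the conditions. Since both lie in $\mathcal{M}_A$, the difference $\log \hat p - \log \hat p'$ lies in $\mathrm{rowspan}(A)$, say $\log \hat p - \log \hat p' = A^\top v$, while $A(\hat p - \hat p') = 0$. Then
\[ 0 = v^\top A(\hat p - \hat p') = \sum_{j=1}^n (\log \hat p_j - \log \hat p'_j)(\hat p_j - \hat p'_j) \ge 0, \]
with equality if and only if $\hat p_j = \hat p'_j$ for every $j$, because $(\log a - \log b)(a-b) > 0$ whenever $a \neq b$ are positive. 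Hence $\hat p = \hat p'$; note this needs no full-rank assumption on $A$.

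The genuinely delicate point, and where I expect the main obstacle, is the clean passage between probability coordinates and the natural parameters $t$. One must correctly account for the homogeneity introduced by the scaling $s$ (equivalently, that $\mathbf{1}$ lies in the row span, which the nonnegativity hypothesis and the toric parametrization ensure), so that $Z$ is well defined and the gradient computation returns exactly $Ap$ rather than a projected variant. The existence clause ``if it exists'' is also essential: the supremum of $\ell$ need not be attained in the open simplex, and when $\frac{1}{u_+}Au$ fails to lie in the interior of the marginal polytope $\conv(\text{columns of } A)$ the maximizer escapes to the boundary, so no $\hat p$ with strictly positive coordinates solves the system.
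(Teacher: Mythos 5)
The paper itself contains no proof of this proposition: it is imported wholesale as Corollary 7.3.9 of Sullivant's book, so there is no internal argument to compare yours against. Your proof is the standard exponential-family derivation of Birch's theorem, and it is correct: the reduction to $\ell(p)=\sum_j u_j\log p_j$ on the normalizing hyperplane, the computation $\nabla_t\ell = Au-u_+\,Ap(t)$ identifying stationarity with moment matching, concavity of $\ell$ via convexity of the cumulant function $\log Z$, and the uniqueness argument from $\sum_j(\log\hat p_j-\log\hat p'_j)(\hat p_j-\hat p'_j)=0$ plus strict monotonicity of $\log$ together establish both that any solution of the three conditions is a global maximizer and that there is at most one such solution; the converse (a maximizer in the positive part corresponds to an unconstrained stationary $t$, hence satisfies moment matching) closes the equivalence. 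Two loosenesses are worth tightening, though neither is a genuine gap. First, your parenthetical claim that the nonnegativity of $A$ ensures $\mathbf{1}\in\mathrm{rowspan}(A)$ is false (take $A=[1\;\;0]$); what actually rescues the argument is the scaling parameter $s$ in the paper's parametrization, equivalently the normalization constraint: with $s$ present one has $\log\hat p-\log\hat p'=A^\top v+c\mathbf{1}$, and the extra term pairs to $c\,\mathbf{1}^\top(\hat p-\hat p')=c(1-1)=0$ precisely because both points sum to one, so the displayed identity survives — this half-line should be written out, since that identity is where all the work happens. Second, your concavity step yields maximality over the image of the parametrization; to conclude maximality over $\mathcal{M}_A$ you should observe that under the paper's definition $\mathcal{M}_A=\{p\in\Delta_{n-1}:\log p\in\mathrm{rowspan}(A)\}$ every model point is strictly positive and hence of the form $p(t)$, so no closure or density argument is needed (whereas if one takes the model to be the nonnegative toric variety, a continuity-plus-density remark is required, consistent with your final paragraph on boundary escape and the ``if it exists'' clause).
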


Proposition~\ref{Birch} is also known as Birch's Theorem. Often we consider $V_A$ as a projective variety in $\mathbb{P}^{n-1}$. The projective version of Proposition~\ref{Birch} is given in Section~\ref{section:ML_degree_drop}. We usually use the affine version when we want to compute the ML degree or find critical points of the likelihood function and the projective version when studying the ML degree drop.

\subsection{Toric Fano varieties}
\label{delPezzo}
In this section we will provide a brief introduction to toric Fano varieties, the main objects of study in this article. Fano varieties are a class of varieties with a special positive divisor class giving an embedding of each variety into projective space. They were introduced by Giro Fano \cite{Fano} and have been extensively studied in birational geometry in the context of the minimal model program (see \cite{Kollar}, \cite{IskovskikhProkhorov}).

\begin{definition}
A complex projective algebraic variety $X$ with ample anticanonical divisor class $-K_X$ is called a \emph{Fano variety}.
\end{definition}

Two-dimensional Fano varieties are also known as \emph{del Pezzo surfaces} named after the Italian mathematician Pasquale del Pezzo who encountered this class of varieties when studying surfaces of degree $d$ embedded in $\mathbb{P}^d$. Throughout this paper we will use the terminology del Pezzo surface to refer to a two-dimensional Fano variety. We note that we do not use the terminology Fano surface, as a Fano surface usually refers to a surface of general type whose points index the lines on a non-singular cubic threefold, which is not a Fano variety \cite{Fano04}.

We will consider Fano varieties that are also toric varieties as defined in Definition~\ref{def:toric_variety}. We first focus on the characterization of two-dimensional Gorenstein toric Fano varieties, i.e. normal toric Fano varieties whose anticanonical divisor $K_X$  is not only an ample divisor but also a Cartier divisor. Isomorphism classes of Gorenstein toric Fano varieties are in bijection with isomorphism classes of reflexive polytopes, which were introduced in \cite{Batyrev}.

\begin{definition}
A lattice polytope is reflexive if it contains the origin in its interior and its dual polytope is also a lattice polytope.
\end{definition} 

In particular, toric del Pezzo surfaces are in bijection with two-dimensional reflexive polytopes. The classification of two-dimensional reflexive polytopes can be found for example in~\cite{Kreuzer_Skarke}.

\begin{proposition}[Section 4 in~\cite{Kreuzer_Skarke}] There are exactly 16 isomorphism classes of two-dimensional reflexive polytopes depicted in Figure~\ref{table:polytopes}.
\end{proposition}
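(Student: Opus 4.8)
The plan is to reduce reflexivity to a combinatorial condition on interior lattice points and then enumerate. The key fact I would establish first is that a full-dimensional lattice polygon $P \subseteq \mathbb{R}^2$ with the origin in its interior is reflexive if and only if the origin is the unique interior lattice point of $P$. Unwinding the definition of the dual polytope $P^* = \{y : \langle x, y \rangle \geq -1 \text{ for all } x \in P\}$, its vertices are the primitive edge normals of $P$ scaled so that each edge $F$ lies on the line $\{x : \langle a_F, x \rangle = -1\}$; hence $P^*$ is a lattice polygon precisely when every edge of $P$ lies at lattice distance exactly one from the origin. The nontrivial, dimension-two-specific direction is that a single interior lattice point forces this unit-distance property: for consecutive vertices $v, w$ the open segments $(0,v)$ and $(0,w)$ lie in the interior of $P$ and so contain no lattice points, making $v$ and $w$ primitive; the triangle $T = \conv(0, v, w)$ then has no interior lattice points and boundary lattice points only at $0$, $v$, $w$ and along $[v,w]$, so Pick's theorem gives normalized area $\mathrm{nArea}(T) = m$, where $m$ is the lattice length of $[v,w]$. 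Since $\mathrm{nArea}(T)$ also equals $m$ times the lattice distance from the origin to the line through $v$ and $w$, that distance is $1$, and as this holds on every edge, $P$ is reflexive.

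Second, I would use this characterization to recast the problem as the classification of lattice polygons with a single interior lattice point up to the action of $\mathrm{AGL}_2(\mathbb{Z})$, that is, unimodular transformations together with translations. Finiteness and the governing numerical bounds follow from three facts: a reflexive polygon has between $3$ and $6$ vertices, with the hexagon the unique six-vertex type; its number of boundary lattice points $b(P)$ satisfies $3 \leq b(P) \leq 9$; and the twelve-point theorem $b(P) + b(P^*) = 12$ pairs a polygon with its dual, roughly halving the enumeration since dual classes are determined together.

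Finally, I would carry out the enumeration. Ordering the vertices $v_1, \dots, v_k$ counterclockwise, each $v_i$ is primitive, the unit-distance condition constrains the successive determinants $\det(v_i, v_{i+1})$, and the cycle must close up; fixing a unimodular normal form for an initial edge reduces the remaining choices to a short, explicitly checkable list. Running through the cases organized by vertex count yields exactly $16$ unimodular classes, which one matches against the polygons of Figure~\ref{table:polytopes}; among them five are smooth (the toric del Pezzo surfaces $\mathbb{P}^2$, $\mathbb{P}^1 \times \mathbb{P}^1$, and the equivariant blow-ups of $\mathbb{P}^2$ at one, two and three torus-fixed points) and the other eleven are singular. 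The main obstacle is making the enumeration simultaneously exhaustive and irredundant: one must guarantee that no unimodular class is missed and that classes related by an element of $\mathrm{GL}_2(\mathbb{Z})$ are not counted twice, for which the duality pairing and a carefully chosen edge normal form are the essential bookkeeping tools.
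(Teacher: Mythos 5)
The paper itself offers no proof of this proposition: it is imported from the classification literature (Section~4 of \cite{Kreuzer_Skarke}), so your proposal has to stand on its own rather than be measured against an internal argument. Its first half does stand: the reduction of reflexivity to ``the origin is the unique interior lattice point'' is correct, and your Pick's-theorem argument for the nontrivial direction is complete and genuinely two-dimensional --- primitivity of the vertices forces the triangle $T=\conv(0,v,w)$ over an edge to have normalized area equal to the lattice length $m$ of that edge, and comparing with the formula $\mathrm{nArea}(T)=m\cdot h$ gives lattice distance $h=1$ for every edge, which is exactly the condition for $P^*$ to be a lattice polygon. This is the right first step, and it is the one the published classifications take.

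The gap is in the second half. For a statement whose entire content is ``there are exactly $16$ classes,'' the enumeration \emph{is} the theorem, and your proposal asserts it rather than performs it: ``running through the cases \dots yields exactly $16$ unimodular classes'' is precisely what must be exhibited, including the check that the list is exhaustive and that no two entries are $\mathrm{GL}_2(\Z)$-equivalent. Moreover, the three ``governing facts'' you feed into the enumeration are themselves unproven, and one of them is circularity-prone: the twelve-point theorem $b(P)+b(P^*)=12$ is frequently presented in the literature as a \emph{corollary} of the very classification you are proving, so using it as an input requires an independent proof (such proofs exist --- via the modular group, or via Noether's formula $K_S^2+\chi_{\mathrm{top}}(S)=12$ applied to minimal resolutions of the two toric surfaces --- but none is given or cited here). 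Your other two bounds lean on it: since $P$ and $P^*$ have the same number $k$ of vertices and $b(P)\geq k$, $b(P^*)\geq k$, the bounds $k\leq 6$ and $b(P)\leq 9$ both come from $2k\leq b(P)+b(P^*)=12$, so as written the whole finiteness scaffolding hangs on an unjustified identity. (An alternative that avoids the issue is Scott's inequality $b\leq 9$ for lattice polygons with one interior point, but you would still need to carry out the case analysis.) The approach is the correct one and would succeed if these two items --- an independent proof of the twelve-point theorem or a replacement bound, and the explicit, completeness-checked enumeration --- were actually supplied.
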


\begin{figure}[ht]
\centering
 \includegraphics[scale=0.80]{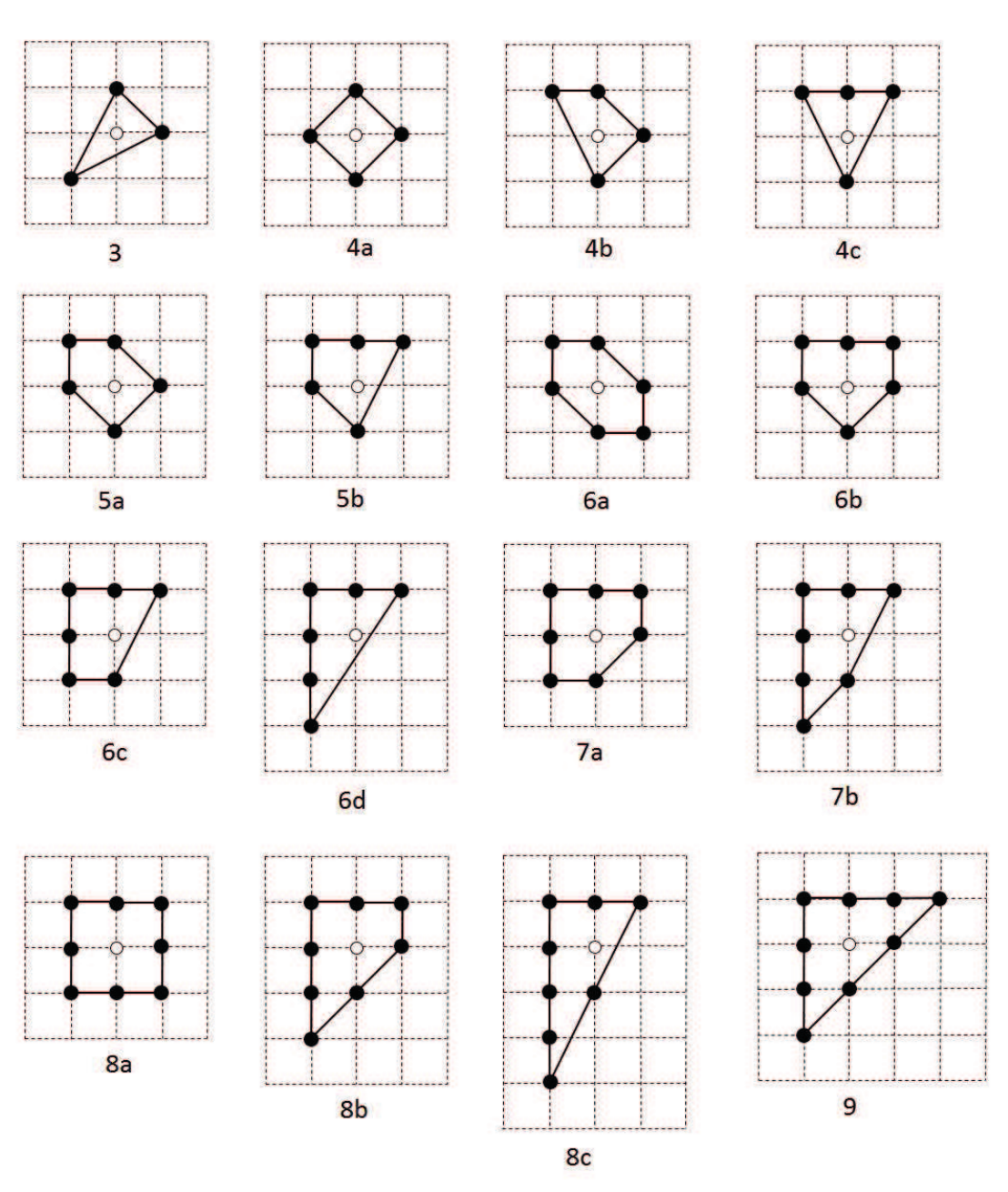}
 \caption{Isomorphism classes of two-dimensional reflexive polytopes}
\label{table:polytopes}
\end{figure}

  In Figure~\ref{table:polytopes}, the reflexive polytopes are labeled by the number of lattice points on the boundary. On the other hand, the self-intersection number $K^2_S$ of the canonical class of a del Pezzo surface is the \emph{degree} of the del Pezzo surface, which we denote by $d$. Here we adopt the approach of \cite[Chapter 8.3]{cox2011toric}, where the reflexive polytope is the one corresponding to the anticanonical embedding of the del Pezzo surface. According to \cite[Chapter 8.3, Ex.  8.3.8]{cox2011toric}, for each of the 16 reflexive polytopes we obtain exactly the corresponding toric del Pezzo surface. Furthermore, in \cite[Chapter 8]{Dolgachev} the degree of each of these surfaces is given and coincides with the number of lattice points on the boundary. In this way, the projective varieties corresponding to the polytopes labeled by $6a,7a,8a,8b$ and $9$ are smooth and the projective varieties corresponding to the rest of the polytopes in~Figure~\ref{table:polytopes} have singularities. The dual of the polytope labeled by number $x$ and letter $y$ is in the isomorphism class of the polytope labeled by number $12-x$ and letter $y$. This is related to the so-called ``12 theorem" for reflexive polytopes of dimension 2 \cite{godinho2017}.
  
\begin{remark}
As explained above, in the manner that toric varieties were defined in Definition~\ref{def:toric_variety}, the degree of the toric variety corresponding to a polytope $Q$ and the number of lattice points on the boundary of $Q$ coincide. However, sometimes in the literature (see for instance \cite[Example, p.~123]{Casagrande}) the dual polytope is used to characterize the isomorphism class of a toric del Pezzo surface. In our setting, the corresponding polytope for $\mathbb{P}^2$ is the polytope $9$ in Figure~\ref{table:polytopes} which gives the anticanonical embedding, i.e. the degree $3$ Veronese embedding into $\mathbb{P}^9$ using the linear system of cubics.
\end{remark}

\section{MLE of two-dimensional Gorenstein toric Fanos} \label{section:del_pezzo}
In this section we determine the ML degree of two-dimensional Gorenstein toric Fano varieties. When the ML degree is less than or equal to three, we reduce the likelihood equations to relatively simple expressions that can be used to compute a closed form for the maximum likelihood estimates. We use the cubic del Pezzo surface as an example to illustrate the MLE derivation. To avoid statistical difficulties, in all of this section we have translated reflexive polygons by a positive vector such that the resulting polygons lie minimally in the positive orthant.
\begin{theorem} \label{theorem:2_dim_Gorenstein_toric_fano}
Let $S_d$ be a two-dimensional Gorenstein toric Fano variety. In Table~\ref{table:MLdegs} we determine the ML degree of $S_d$ and show that it is equal to the degree $d$ of the surface in all cases except for the quintic surface $S_{5a}$. Table~\ref{table:closedforms} provides explicit expressions for computing the maximum likelihood estimate of the algebraic statistical models corresponding to the cubic $S_3$, the quartics $S_{4a}, S_{4b}, S_{4c}$ and the quintic $S_{5a}$ toric two-dimensional Fano variety.
\end{theorem}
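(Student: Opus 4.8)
The plan is to reduce the statement to an explicit intersection problem and then to verify it polytope by polytope. Let $A\in\Z^{2\times n}$ be the matrix whose columns are the translated lattice points of the reflexive polygon and use the parametrization $p_j=s\,\theta_1^{a_{1j}}\theta_2^{a_{2j}}$ of Definition~\ref{def:toric_variety}. Setting the logarithmic derivatives of $L_u$ to zero, the $s$-derivative yields a trivial identity, while for $k=1,2$ the $\theta_k$-derivatives give
\[
(Au)_k\,\Big(\textstyle\sum_{j} p_j\Big)\;=\;u_+\,(Ap)_k .
\]
These two equations are homogeneous and linear in $p$, so they cut out a linear subspace $L_u\subseteq\PP^{n-1}$ of codimension two, which meets the surface $V_A$ in finitely many points; the ML degree is the number of such points lying in the open torus (all coordinates nonzero) and satisfying $\sum_j p_j\neq 0$, since points violating either condition are not critical points of the rational function $L_u$. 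As $L_u$ is linear, B\'ezout's theorem gives $\mldeg(S_d)\le\deg V_A=d$, the degree being the number of boundary lattice points recorded after Figure~\ref{table:polytopes}. This reproves the upper bound uniformly; the content of the theorem is the exact size of the drop.

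To obtain equality I would argue, for each of the sixteen polygons, that (i) the intersection $V_A\cap L_u$ is reduced for generic $u$, and (ii) all $d$ intersection points actually lie in the open torus with nonzero coordinate sum, so that none is discarded. Concretely, I would fix the explicit $A$ read off from Figure~\ref{table:polytopes} after the translation into the positive orthant, substitute the toric parametrization into the two linear conditions above, and eliminate, after saturating by the coordinate functions and by $\sum_j p_j$, to a single univariate polynomial; its degree is then exactly the ML degree, and its squarefreeness certifies the transversality in (i). For the five smooth surfaces $6a,7a,8a,8b,9$, and more generally whenever the toric boundary meets $L_u$ mildly, the combinatorial formula for the ML degree of a toric variety in \cite{Catanese_etal} applies and evaluates to the normalized volume of the polygon, which equals $d$ by Pick's theorem; this shortens the bookkeeping, while the remaining singular cases are handled by the elimination above. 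In every case except $S_{5a}$ this yields exactly $d$ honest critical points.

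The genuine exception is $S_{5a}$, and this is where I expect the real work. Here $V_A\cap L_u$ still consists of $\deg V_A=5$ points by B\'ezout, but I would show that two of them have coordinate sum equal to zero and hence are not critical points of $L_u$, forcing $\mldeg(S_{5a})=3$. The plan is to exhibit these two points explicitly as the roots of the residual quadratic obtained once the linear equations are restricted to the locus $\sum_j p_j=0$; by Lemma~\ref{lemma:removed_points_independent} they do not move with $u$, which both explains the uniform drop of exactly two and certifies that no further points are lost, as illustrated in Example~\ref{example:ML_degree_drop_5a}.

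Finally, for the closed-form column of Table~\ref{table:closedforms} I would use that the relevant ML degrees are all at most four: $S_3$ has degree three, the quartics $S_{4a},S_{4b},S_{4c}$ have degree four, and $S_{5a}$ reduces to degree three. For each of these, the elimination step produces a univariate polynomial of degree equal to the ML degree, which, being of degree at most four, is solvable in radicals; back-substituting through the toric parametrization and through the characterization of $\hat p$ in Proposition~\ref{Birch} then expresses the maximum likelihood estimate explicitly. The main obstacles I anticipate are organizational rather than conceptual: verifying transversality uniformly across the singular polygons rather than merely for generic $u$, and, for $S_{5a}$, correctly isolating the two data-independent points on $\{\sum_j p_j=0\}$ so that the surviving cubic is the one that governs the MLE.
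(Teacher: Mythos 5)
Your proposal is correct and follows essentially the same route as the paper: the theorem is established by a case-by-case computation of the likelihood equations (Birch's theorem plus elimination/saturation, carried out in the paper with \texttt{Macaulay2}), using the B\'ezout/Huh--Sturmfels bound $\mldeg(S_d)\le d$ as the a priori upper bound, explaining the $S_{5a}$ drop by the two data-independent intersection points with zero coordinate sum exactly as in Example~\ref{example:ML_degree_drop_5a} and Lemma~\ref{lemma:removed_points_independent}, and obtaining Table~\ref{table:closedforms} by reducing to a univariate polynomial of degree equal to the ML degree and back-substituting through Proposition~\ref{Birch}. Your proposed shortcut for the smooth cases via the combinatorial formula of \cite{Catanese_etal} is only a cosmetic variation, since it still requires verifying case by case that the principal $A$-determinant does not vanish at $c=(1,\dots,1)$, which is the same kind of check the paper performs.
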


\begin{table}[htp]
\centering
\begin{tabular}{||c c||}
 \hline
 Ideal of some degree $d$ del Pezzo $S_d$ & $\mldeg$\\ [0.5ex]
 \hline\hline
 $S_3: p_1 p_2 p_3 - p_4^3$ & 3  \\
 \hline
 $S_{4a}: p_2p_4-p_1p_5, p_3^2-p_1p_5$ & 4 \\
 \hline
 $S_{4b}: p_2p_4-p_3^2, p_2p_3-p_1p_5$ & 4 \\
 \hline
 $S_{4c}: p_2p_4-p_3^2, p_2^2-p_1p_5$ & 4 \\
 \hline
$S_{5a}: p_3p_5-p_4p_6, p_2p_5-p_6^2, p_2p_4-p_3p_6, p_1p_4-p_6^2, p_1p_3-p_2p_6$ & 3\\
\hline
$S_{5b}: p_3p_5-p_4p_6, p_2p_5-p_6^2, p_2p_4-p_3p_6, p_1p_4-p_2p_6, p_2^2-p_1p_3$ & 5 \\
\hline
$S_{6a}: p_4p_6-p_5p_7, p_3p_6-p_7^2, p_2p_6-p_1p_7, p_3p_5-p_4p_7, p_2p_5-p_7^2$ & \\
$p_1p_5-p_6p_7, p_2p_4-p_3p_7, p_1p_4-p_7^2, p_1p_3-p_2p_7$ & 6 \\
\hline
$S_{6b}: p_5 p_6-p_1 p_7, p_4 p_6-p_2 p_7, p_3 p_5-p_4 p_7, p_2 p_5-p_7^2, p_2 p_4-p_3 p_7$ & \\
$p_1 p_4-p_7^2, p_1 p_3-p_2 p_7, p_2^2-p_3 p_6, p_1 p_2-p_6 p_7$ & 6 \\
\hline
$S_{6c}: p_6^2-p_5 p_7, p_4 p_6-p_3 p_7, p_3 p_6-p_2 p_7, p_4 p_5-p_2 p_7, p_3 p_5-p_2 p_6$ & \\
$p_2 p_4-p_1 p_7, p_3^2-p_1 p_7, p_2 p_3-p_1 p_6, p_2^2-p_1 p_5$ & 6 \\
\hline
$S_{6d}: p_6^2-p_5p_7, p_5p_6-p_4p_7, p_3p_6-p_2p_7, p_5^2-p_4p_6, p_3p_5-p_2p_6$ & \\
$p_3p_4-p_2p_5, p_3^2-p_1 p_6, p_2 p_3-p_1 p_5, p_2^2-p_1p_4$ & 6 \\ [1ex]
\hline
$S_{7a}: p_5p_7-p_4p_8, p_4p_7-p_3p_8, p_2p_7-p_1p_8, p_5p_6-p_3p_8, p_4p_6-p_3p_7$ & \\
$p_2p_6-p_1p_7, p_4p_5-p_2p_8, p_3p_5-p_1p_8, p_4^2-p_1p_8, p_3p_4-p_1p_7$ & \\
$p_2p_4-p_1p_5, p_3^2-p_1p_6, p_7^2-p_6p_8, p_2p_3-p_1p_4$ & 7 \\ [1ex]
\hline
$S_{7b}: p_7^2-p_6p_8, p_6p_7-p_5p_8, p_4p_7-p_3p_8, p_3p_7-p_2p_8, p_6^2-p_5p_7$ & \\
$p_4p_6-p_2p_8, p_3p_6-p_2p_7, p_4p_5-p_2p_7, p_3p_5-p_2p_6, p_3p_4-p_1p_8$ & \\
$p_2p_4-p_1p_7,p_3^2-p_1p_7, p_2p_3-p_1p_6,p_2^2-p_1p_5$ & 7 \\ [1ex]
\hline
$S_{8a}: p_8^2-p_7p_9, p_6p_8-p_5p_9, p_5p_8-p_4p_9, p_3p_8-p_2p_9, p_2p_8-p_1p_9$ & \\
$p_6p_7-p-4p_9, p_5p_7-p_4p_8, p_3p_7-p_1p_9, p_2p_7-p_1p_8, p_6^2-p_3p_9$ & \\
$p_5p_6-p_2p_9, p_4p_6-p_1p_9, p_5^2-p_1p_9, p_4p_5-p_1p_8,p_3p_5-p_2p_6$ & \\
$p_2p_5-p_1p_6, p_4^2-p_1p_7, p_3p_4-p_1p_6, p_2p_4-p_1p_5,p_2^2-p_1p_3$ & 8 \\ [1ex]
\hline
$S_{8b}: p_8^2-p_7p_9, p_7p_8-p_6p_9, p_5p_8-p_4p_9, p_4p_8-p_3p_9, p_2p_8-p_1p_9$ & \\
$p_7^2-p_6p_8, p_5p_7-p_3p_9, p_4p_7-p_3p_8, p_2p_7-p_1p_8, p_5p_6-p_3p_8$ & \\
$p_4p_6-p_3p_7, p_2p_6-p_1p_7, p_5^2-p_2p_9, p_4p_5-p_1p_9, p_3p_5-p_1p_8$ & \\
$p_4^2-p_1p_8, p_3p_4-p_1p_7, p_2p_4-p_1p_5, p_3^2-p_1p_6, p_2p_3-p_1p_4$ & 8 \\ [1ex]
\hline
$S_{8c}: p_8^2-p_7p_9, p_7p_8-p_6p_9, p_6p_8-p_5p_9, p_4p_8-p_3p_9, p_3p_8-p_2p_9$ & \\
$p_7^2-p_5p_9, p_6p_7-p_5p_8, p_4p_7-p_2p_9, p_3p_7-p_2p_8, p_6^2-p_5p_7$ & \\
$p_4p_6-p_2p_8, p_3p_6-p_2p_7, p_4p_5-p_2p_7, p_3p_5-p_2p_6, p_4^2-p_1p_9$ & \\
$p_3p_4-p_1p_8, p_2p_4-p_1p_7, p_3^2-p_1p_7, p_2p_3-p_1p_6,p_2^2-p_1p_5$ & 8 \\ [1ex]
\hline
$S_{9}: p_9^2-p_8p_{10}, p_8p_9-p_7p_{10},p_6p_9-p_5p_{10},p_5p_9-p_4p_{10}, p_3p_9-p_2p_{10}$ & \\
$p_8^2-p_7p_9,p_6p_8-p_4p_{10},p_5p_8-p_4p_9, p_3p_8-p_2p_9, p_6p_7-p_4p_9$ & \\
$p_5p_7-p_4p_8, p_3p_7-p_2p_8,p_6^2-p_3p_{10}, p_5p_6-p_2p_{10}, p_4p_6-p_2p_9$ & \\
$p_3p_6-p_1p_{10},p_2p_6-p_1p_9, p_5^2-p_2p_9, p_4p_5-p_2p_8, p_3p_5-p_1p_9$ & \\
$p_2p_5-p_1p_8, p_4^2-p_2p_7, p_3p_4-p_1p_8, p_2p_4-p_1p_7,p_3^2-p_1p_6$ & \\
$p_2p_3-p_1p_5, p_2^2-p_1p_4$ & 9 \\ [1ex]
\hline
\end{tabular}
\caption{ML degrees of 2-dimensional Gorenstein toric Fanos} \label{table:MLdegs}
\end{table}

Table \ref{table:MLdegs} is constructed using Theorem~\ref{Birch} and \texttt{Macaulay2} \cite{M2}. The results described in Table~\ref{table:MLdegs} are in accordance with \cite[Theorem 3.2]{Huh_Sturmfels}, which states that the ML degree of a projective toric variety is bounded above by its degree.  We see in Table~\ref{table:MLdegs} that the ML degree drops to three in the case of a quintic del Pezzo surface $S_{5a}$ corresponding to the reflexive polytope $5a$ in  Table~\ref{table:polytopes}. The next section provides an explanation of the ML degree drop in the case of the quintic $S_{5a}$ using the notion of $A$-discriminant.

\begin{example}[Singular Cubic del Pezzo surface]
\label{example:cubic}
Consider the reflexive polytope
\begin{center}
 \includegraphics[scale=0.35]{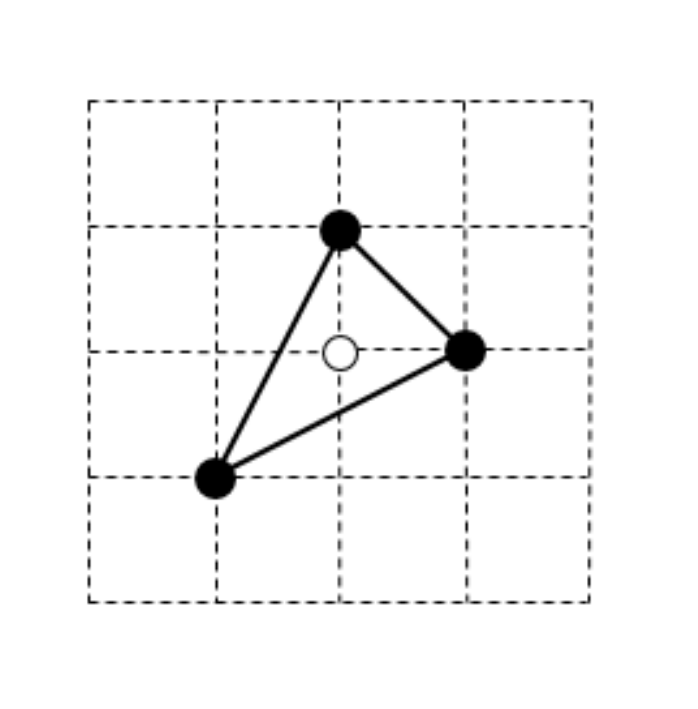}
\end{center}
The corresponding projective toric variety is a cubic surface $S_3$ in $\mathbb{P}^3$ with three singular points. Its ideal is generated by $I_{S_3}= < p_4^3 - p_1 p_2 p_3 >$.

We are interested in the algebraic statistical model given by the matrix
\[
A=
  \begin{bmatrix}
    2 & 1 & 0 & 1\\
    1 & 2 & 0 & 1

  \end{bmatrix}.
\]
This nonnegative integer matrix $A$ gives the parametrization map
$$f: \mathbb{C}^3 \to \mathbb{C}^3  \text{  ,  }   (s, \theta_1, \theta_2) \mapsto ( s\theta_1^2 \theta_2 , s\theta_1 \theta_2^2 , s, s\theta_1 \theta_2  ) .$$

After applying Birch's theorem, we can write the unique maximum likelihood estimate $\hat{s},\hat{\theta}$ for the data $u$  as
$(\hat{s},\hat{\theta}_1, \hat{\theta}_2) = (\frac{\hat{p}_4^3}{\hat{p}_1 \hat{p}_2},\frac{\hat{p}_1}{\hat{p}_4}, \frac{\hat{p}_2}{\hat{p}_4})$, where
\begin{eqnarray*}
\hat{p}_1 & = & x+a,\\
\hat{p}_2 & = & x+b,\\
\hat{p}_4 & = & -3x+c,
\end{eqnarray*}
with $a=\frac{u_1-u_3}{u_+}, b=\frac{u_2-u_3}{u_+}, c=\frac{3u_3+u_4}{u_+} $
and $x$ is given by

 \begin{dmath*}
 28x^3 + [(a+b)-27c] x^2 + [ab+9c^2]x  - c^3 = 0.
\end{dmath*}
\end{example}

\begin{landscape}
\begin{table}[htbp]
\begin{tabular}{|c|c|c|}
\hline
 Polytope $Q_d$ & MLE & Polynomial equation of degree $d=\mldeg$  \\
\hline
 & $\hat{s} = \frac{(-3x+c)^3}{(x+a)(x+b)}$  & $ 28x^3 + [a+b-27c] x^2 + [ab+9c^2]x  - c^3 = 0$\\
 \parbox[c]{1em}{\includegraphics[scale=0.3]{LatticePolytope_3pts_1.pdf}} \hspace{1cm} & $\hat{\theta}_1 = \frac{x+a}{-3x+c}$   & where \\
 & $\hat{\theta}_2 = \frac{x+b}{-3x+c}$ & where $a=\frac{u_1-u_3}{u_+}, b=\frac{u_2-u_3}{u_+}, c=\frac{3u_3+u_4}{u_+} $\\
\hline
 & $\hat{s} = \frac{(-x^2+8x+8-(3b^2-4a^2+4))^3}{16(x-1)^2(x+b)(-7x^2+(8a+8)x+(3b^2-4a^2-4-8a))}$ & $ 15x^4-16x^3+(8a^2-22b^2-56)x^2+(16(4-4a^2+5b^2))x$ \\
 \parbox[c]{1em}{\includegraphics[scale=0.3]{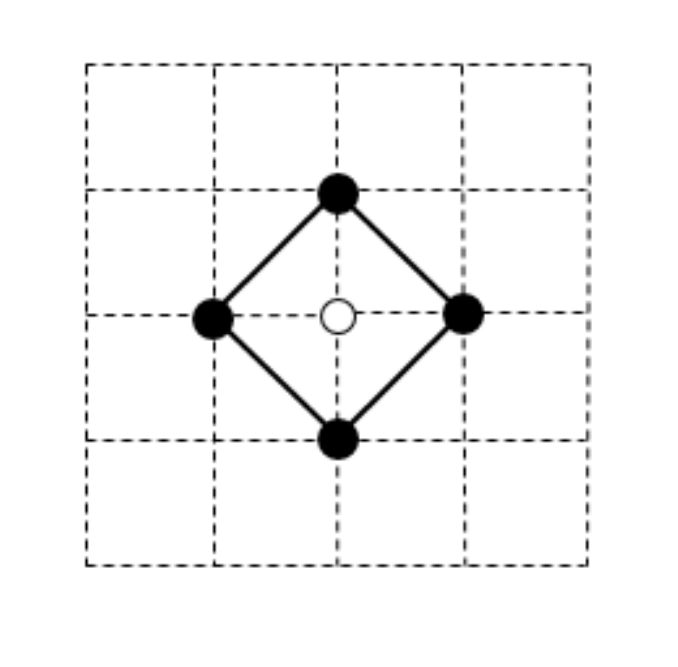}} \hspace{1cm} & $\hat{\theta}_1 = \frac{-7x^2+(8a+8)x+(3b^2-4a^2-4-8a)}{2(-x^2+8x-(3b^2-4a^2+4))}$ & $+8(4a^2-5b^2-2)-(4a^2-3b^2)^2 = 0 $\\
 &  &  where $a=\frac{u_1-u_4}{u_+}, b=\frac{u_2-u_3}{u_+}, c=\frac{2u_3+2u_4+u_5}{u_+}$ \\
& $\hat{\theta}_2 = \frac{4(x+b)(x-1)}{-x^2+8x-(3b^2-4a^2+4)}$ & \\
\hline
 & $\hat{s} = \frac{(4x^2+2(c-1)x+ab)^3}{(1-x)(x^2+(c+1)x+ab+c)(-2x^2-(2c+a)x+a-ab)}$ & $17x^4+(17c-16)x^3 +(3+9ab-8c+4c^2)x^2$ \\
 \parbox[c]{1em}{\includegraphics[scale=0.3]{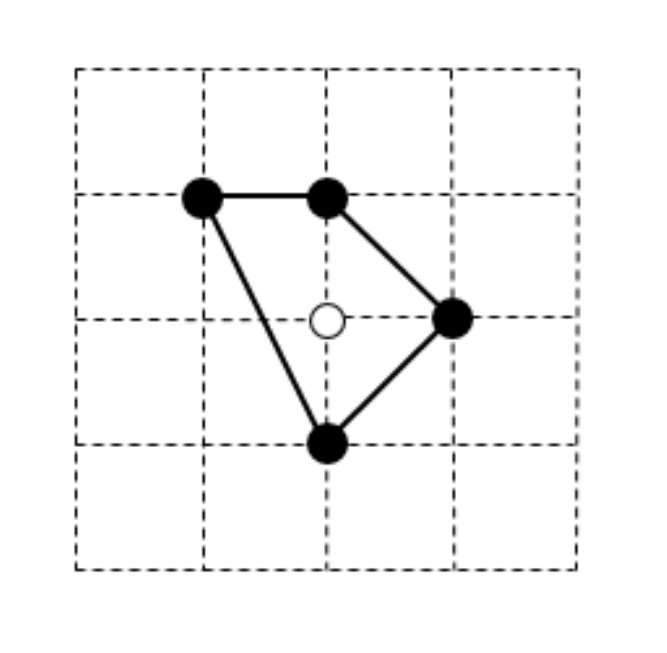}} \hspace{1cm} & $\hat{\theta}_1 = \frac{-2x^2-(a+2c)x+(a-ab)}{4x^2+2(c-1)x+ab}$ & $+(4 a b c-5ab - c)x + a^2b^2=0$\\
& $\hat{\theta}_2 = \frac{x^2+(c+1)x+(ab+c)}{4x^2+2(c-1)x+ab}$ & where $a=\frac{u_1+2u_4+u_5}{u_+}$, $b=\frac{u_3+2u_4+u_5}{u_+}$ and $c=\frac{u_2-3u_4-u_5}{u_+}$\\
\hline
 & $\hat{s} = \frac{4x^2(b-x)}{-3x^2-(2a+2)x+(4a+c)b}$ & $-55x^4 +12x^3 +(c(4a+c)+b(5b-8))x^2$ \\
 \parbox[c]{1em}{\includegraphics[scale=0.3]{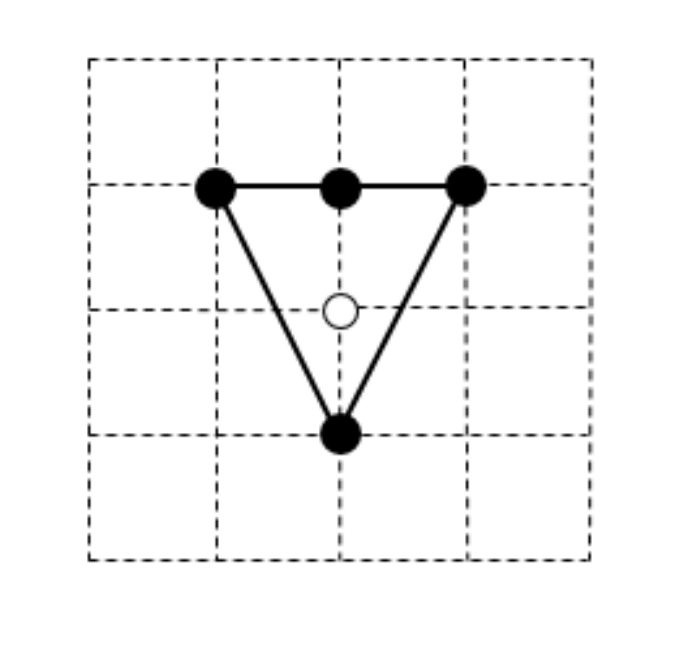}} \hspace{1cm} & $\hat{\theta}_1 = \frac{-3x^2-(2a+2)x+(4a+c)b}{8x^2}$ & $ -(4b(ab+ac+c))x + (4a+c)b^2c  = 0$ \\
& $\hat{\theta}_2 = \frac{2x}{b-x}$ & where $a=\frac{u_2-u_4}{u_+}$, $b=\frac{2u_1+u_5}{u_+}$ and $c=\frac{2u_3+4u_4+u_5}{u_+}$\\
\hline
 & $\hat{s} = \frac{(-x^2+cx)(x^2+(a+b)x)}{2x^2-(b+2c)x+bc}$ & $ -5x^3 + (3-5a)x^2 + (-a-b(b+5c)) x + b^2c = 0$ \\
  \parbox[c]{1em}{\includegraphics[scale=0.34]{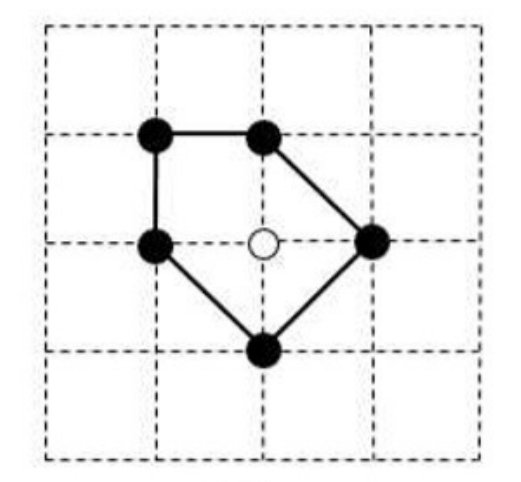}} \hspace{0.7cm} & $\hat{\theta}_1 = \frac{2x^2-(b+2c)x+bc}{x^2+(a+b)x} $ &  where \\
& $\hat{\theta}_2 = \frac{2x^2-(b+2c)x+bc}{-x^2+cx}$ & $a=\frac{u_2-u_3-3u_4-2u_6}{u_+}$, $b=\frac{u_3+u_5+2(u_4+u_6)}{u_+}$ and $c=\frac{u_1+u_3+u_6+2u_4}{u_+}$\\
\hline
\end{tabular}
\caption{Explicit forms for the MLE for 2-dimensional Gorenstein toric Fanos}  \label{table:closedforms}
\end{table}
\end{landscape}

\begin{remark} When the ML degree of the del Pezzo surface is greater than or equal to five, the maximum likelihood estimate  $\hat{p}_i , \, i=1,\ldots, n$ satisfies an equation of degree five or higher. By the Abel-Ruffini theorem there is no algebraic solution for a general polynomial equation of degree five or higher, therefore one would expect that it is not possible to obtain a closed form solution for the maximum likelihood estimate in these cases. However, one can then turn to numerical algebraic geometry methods to compute the MLE (see e.g. \cite{Hauenstein_etal}).
\end{remark}

\section{ML degree drop} \label{section:ML_degree_drop}

In order to understand why the ML degree is lower than the degree for the quintic del Pezzo surface $5a$, it is useful to think of different embeddings of a toric variety via scalings and how these affect the ML degree. For a full analysis see \cite{Carlos_etal}.

Let $Q \subseteq \mathbb{R}^{d-1}$ be a lattice polytope with $n$ lattice points $a_j \in \mathbb{Z}^{d-1}$. Define $A$ to be the $(d-1) \times n$ matrix with the  columns $a_1,\ldots,a_n$. A scaling $c \in (\CC^*)^n$ can be used to define the parametrization
$\psi^c \, : \, (\CC^*)^d \longrightarrow (\CC^*)^n$ as
$$ \psi^c(s,\theta_{1},\dots,\theta_{d-1})  \, = \, (c_{1}s\theta^{a_{1}},\dots,c_{n}s\theta^{a_{n}}).$$
We denote by $V^c$ the Zariski closure of the image of the monomial map $\psi^c$. The usual parametrization of the toric variety is when $c=(1,\dots,1)$. We then denote the corresponding toric variety by $V= V^{(1,\dots,1)}$. 

\begin{definition}
The \emph{ML degree drop} of a scaled toric variety $V^c$ is the difference $ \deg(V) - \mldeg(V^c)$.
\end{definition}

Define $f_c = \sum_{i=1}^n c_i \theta^{a_i}$ where $c = (c_1, \ldots, c_n) \in (\CC^*)^n$.

\begin{definition}\label{def:Adisc}
To any matrix $A$ as above, one can associate the variety
\[
\nabla_{A} =\overline{\left\{ c\in\left(\mathbb{C}^{*}\right)^{n} \mid\exists\theta\in\left(\mathbb{C}^{*}\right)^{d-1}\text{ such that }f_c\left(\theta\right)=\frac{\partial f_c}{\partial\theta_{i}}\left(\theta\right)=0\text{ for all }i\right\} }.
\]
This is the Zariski closure of the set of scalings $c \in \left( \mathbb{C}^{*}\right)^{n}$ such that the hypersurface $\{ f_c=0 \}$ has a singular point in $\left(\mathbb{C}^{*}\right)^{d-1}$. If the affine lattice generated by $A$ is the full integer lattice, that is $\{\sum \lambda_j a_j: \lambda_j \in \mathbb{Z}, \sum \lambda_j=1\}=\mathbb{Z}^{d-1}$, then the variety $\nabla_{A}$ is a hypersurface. In this case, it is defined by an irreducible polynomial denoted $\Delta_{A}$, called the \emph{$A$-discriminant}~\cite[Chapter~8]{GKZ}.
\end{definition}

The main object that determines whether the ML degree drops is the polynomial:
\begin{equation}\label{eqn:Adet}
E_A({c}) \, = \, \prod_{\Gamma \text{ face of } Q} \Delta_{\Gamma \cap A}(c)
\end{equation}
 where the product is taken over all nonempty faces $\Gamma \subset Q$ including $Q$ itself and $\Gamma \cap A$ is the matrix whose columns correspond to the lattice points contained in $\Gamma$. Under certain conditions this is precisely the \emph{principal A-determinant} \cite[Chapter~10]{GKZ}.

\begin{theorem}[Theorem 2 in~\cite{Carlos_etal}] \label{theorem:ML_degree_drop}
Let $V^c \subset \PP^{n-1}$ be the scaled toric variety defined by the monomial parametrization with scaling $c \in (\CC^*)^n$ fixed. Then $\mldeg(V^c) < \deg(V)$ if and only if $E_A(c)=0$.
\end{theorem}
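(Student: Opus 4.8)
The plan is to express $\mldeg(V^c)$ as the number of solutions of the likelihood equations inside the torus $(\CC^*)^{d-1}$, and then to use Bernstein--Kushnirenko--Khovanskii (BKK) theory to identify exactly when this count equals the normalized volume of $Q$, which is $\deg(V)$. First I would substitute the parametrization $p_i = c_i s \theta^{a_i}$ into the log-likelihood $\ell = \sum_i u_i \log p_i - u_+\log(\sum_i p_i)$. Since $\sum_i p_i = s\,f_c(\theta)$, the partial derivative in $s$ vanishes identically, so critical points are governed by the master function $\tilde\ell(\theta) = \langle Au, \log\theta\rangle - u_+\log f_c(\theta)$ on $(\CC^*)^{d-1}$. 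Clearing denominators, its critical points are precisely the torus solutions of the Laurent polynomial system
\[
P_k(\theta) \;=\; \theta_k\,\frac{\partial f_c}{\partial \theta_k}(\theta) \;-\; b_k\, f_c(\theta) \;=\; 0, \qquad k = 1,\dots,d-1,
\]
where $b_k = (Au)_k/u_+$. Because the operator $\theta_k\partial_{\theta_k}$ preserves monomials, each $P_k$ has Newton polytope contained in $Q$, and for generic data it is exactly $Q$.

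Next I would invoke Bernstein's theorem: the number of torus solutions of $\{P_k=0\}$, counted with multiplicity, is at most the mixed volume $MV(Q,\dots,Q) = (d-1)!\operatorname{Vol}(Q) = \deg(V)$, with equality exactly when the system is BKK-nondegenerate, i.e.\ for every nonzero inner normal $w$ the facial subsystem $\{\operatorname{in}_w P_k = 0\}$ has no common zero in the torus. The central task is then to match this facial nondegeneracy with the nonvanishing of the face discriminants. For the face $\Gamma = \mathrm{face}_w(Q)$ one computes $\operatorname{in}_w P_k = \theta_k\partial_{\theta_k}f_c^{\Gamma} - b_k f_c^{\Gamma}$, where $f_c^{\Gamma}$ is the truncation of $f_c$ to the lattice points of $\Gamma$. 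Splitting into two cases, for generic $b$ a common torus zero with $f_c^{\Gamma}\neq 0$ is impossible (in coordinates adapted to the affine span of $\Gamma$ the transverse equations force some $b_k=0$), so any common zero satisfies $f_c^{\Gamma}=0$ together with $\theta_k\partial_{\theta_k}f_c^{\Gamma}=0$ for all $k$; this is exactly a singular point of $\{f_c^{\Gamma}=0\}$ in the torus, i.e.\ $\Delta_{\Gamma\cap A}(c)=0$. Taking the product over all faces, the system is BKK-nondegenerate if and only if $E_A(c)\neq 0$.

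Combining these, for generic data $u$ no further degeneracy is introduced through the right-hand side $b$, so $\mldeg(V^c)=\deg(V)$ precisely when $E_A(c)\neq 0$, and strictly less otherwise. This simultaneously accounts for both mechanisms of the drop: for proper faces $\Gamma\subsetneq Q$ a vanishing $\Delta_{\Gamma\cap A}(c)$ produces solutions escaping to the toric boundary (landing on a coordinate hyperplane or on $\{\sum p_i=0\}$ and hence not counted), while the factor $\Delta_A(c)$ for $\Gamma=Q$ encodes nontransversal intersection of $V^c$ with the defining hyperplane.

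The step I expect to be the main obstacle is the facial matching argument: proving rigorously that the facial subsystem of the gradient equations degenerates in the torus if and only if the truncation $f_c^{\Gamma}$ acquires a torus singularity, and that this bookkeeping captures every loss relative to the Bernstein bound with no over- or under-counting. One must argue carefully that genericity of $u$ guarantees all solution loss is attributable to $c$ through the face discriminants rather than to an accidental degeneracy of $b$, handle the Euler-type relations among the $\theta_k\partial_{\theta_k}f_c^{\Gamma}$ that arise from the affine span of $\Gamma$, and reconcile multiplicities so that the honest drop in the number of simple critical points matches the vanishing of $E_A(c)$ exactly.
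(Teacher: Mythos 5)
First, a remark on the comparison itself: this paper does not prove Theorem~\ref{theorem:ML_degree_drop} at all --- it imports it as Theorem~2 of~\cite{Carlos_etal}, whose proof runs along exactly the route you chose (likelihood equations as a sparse system on the torus, Bernstein's theorem, and the Gelfand--Kapranov--Zelevinsky theory of the principal $A$-determinant). So your skeleton matches the source, but two steps in it have genuine gaps, and they are precisely the points where the GKZ input cannot be avoided. The first gap is your identification of $\mldeg(V^c)$ with the number of torus solutions of $\{P_k=0\}$. Any torus singular point of $\{f_c=0\}$ solves $P_k=\theta_k\partial_{\theta_k}f_c-b_kf_c=0$ for \emph{every} $u$, but it is not a critical point of the likelihood: it corresponds to a point of $V$ on the hyperplane $\sum_i p_i=0$ and must be discarded (this is Lemma~\ref{lemma:removed_points_independent}, and Example~\ref{example:ML_degree_drop_5a} is exactly this phenomenon). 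This also falsifies your claimed equivalence ``BKK-nondegenerate iff $E_A(c)\neq 0$'': Bernstein's criterion quantifies only over $w\neq 0$, i.e.\ over \emph{proper} faces, so the factor $\Delta_A$ (the face $\Gamma=Q$) is invisible to it. When $\Delta_A(c)=0$ genuinely but all proper-face discriminants are nonzero, the system is BKK-nondegenerate and attains the full count $\deg V$; the drop comes solely from discarding the singular points of $f_c$ among those solutions. The quantity you must control is the solution count \emph{minus} the number of solutions on $\{f_c=0\}$.

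The second gap is in the direction ``$E_A(c)=0$ implies a drop.'' Your facial analysis proves only the easy implication: a torus zero of a facial system forces $\Delta_{\Gamma\cap A}(c)=0$. The converse fails face by face, because $\nabla_{\Gamma\cap A}$ in Definition~\ref{def:Adisc} is a Zariski \emph{closure}: $\Delta_{\Gamma\cap A}(c)=0$ does not imply that $f_c^{\Gamma}$ actually has a singular point in the torus. The paper itself exhibits this in Remark~\ref{rmk:row6} and Row~6 of Table~\ref{table:Ver}, where $\Delta_A(c)=0$ yet $f_c$ has no torus singularity. What rescues the theorem is the structure theorem for the principal $A$-determinant \cite[Chapter~10]{GKZ}: $E_A$ is a resultant over the \emph{complete} toric variety $X_A$, so its zero locus is exactly the locus where some face polynomial $f_c^{\Gamma}$ has a genuine torus singular point --- the spurious closure points of one face's discriminant are always absorbed by genuine degeneracy of another face. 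Without invoking (or reproving) this fact, your biconditional does not close, and the ``only if'' direction of the theorem remains unproved. Two smaller items: you should note that for generic $u$ the solutions off $\{f_c=0\}$ are simple, so Bernstein's count with multiplicity agrees with the ML degree count; and your Euler-relation argument for proper faces is correct once written as $\sum_k w_k\,\mathrm{in}_w P_k=(m_\Gamma-\langle w,b\rangle)f_c^{\Gamma}$ with $\langle w,b\rangle\neq m_\Gamma$ holding generically, rather than ``forcing some $b_k=0$.''
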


\begin{example} \label{example:ML_degree_drop_5a}
We will explain why for $c=(1,1,\dots,1)$, the ML degree of the quintic del Pezzo $5b$ is 5 (and thus equal to its degree), while the ML degree of the quintic del Pezzo $5a$ is strictly less than 5.

Let us consider first the case of the quintic del Pezzo $5b$.
\begin{center}
 \includegraphics[scale=0.50]{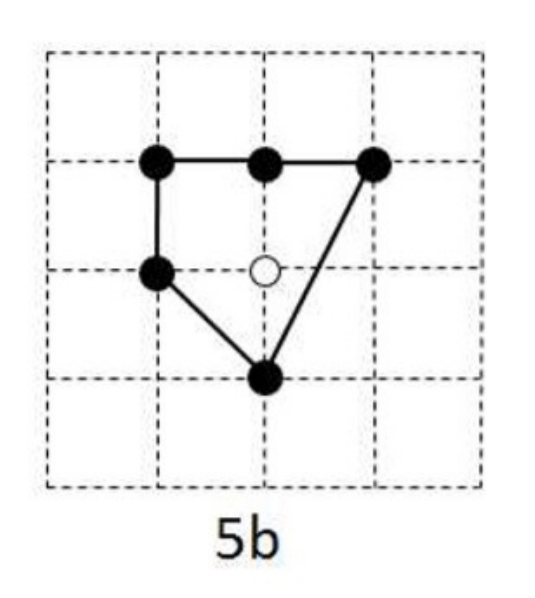}
\end{center}
We can label its lattice points and arrange them in the matrix
$$A = \begin{bmatrix}
0 & 0 & 1 & 1 & 1 & 2 \\
1 & 2 & 0 & 1 & 2 & 2
\end{bmatrix} $$
We have to check that for $c = (1,\dots, 1)$, $E_A(c) \neq 0 $. By \eqref{eqn:Adet}, the polynomial $E_A(c)$ is a product of $\Gamma \cap A$-discriminants. Vertices $a_i$ have $\Delta_{a_i}=1$.  Analogously, edges of lattice length one cannot have non-trivial discriminant (the lattice length of an edge is the number of lattice points contained in the edge minus one). The only potential edge $e$ that may be relevant here is the one of lattice length 2. The corresponding $f_{c,e}= c_{02}y^2 + c_{12}xy^2 + c_{22}x^2y^2 $ has a nontrivial singularity if and only if $c_{02} + c_{12}x + c_{22}x^2 $ does, thus $\Delta_e(c) = c_{12}^2 - 4c_{02}c_{22}$. Note it is non-zero for $c=(1,\dots,1)$.

It only remains to check that $(1,\dots,1) \notin \nabla_A$. The following \texttt{M2} computation verifies that for $c=(1,\dots,1)$,  $f_c=y+y^2+x+xy^2+x^2y^2+xy$ has no singularities:

\begin{verbatim}
R = QQ[x,y]
J = ideal(y+y^2+x+x*y^2+x^2*y^2+x*y, 1+y^2+2*x*y^2+y,
1+2*y+2*x*y+2*x^2*y+x)
gens gb J
\end{verbatim}
The last command returns that the Gr\"obner basis for $J$ is $\{ 1 \}$.
Now, for the quintic del Pezzo $5a$,
\begin{center}
 \includegraphics[scale=0.50]{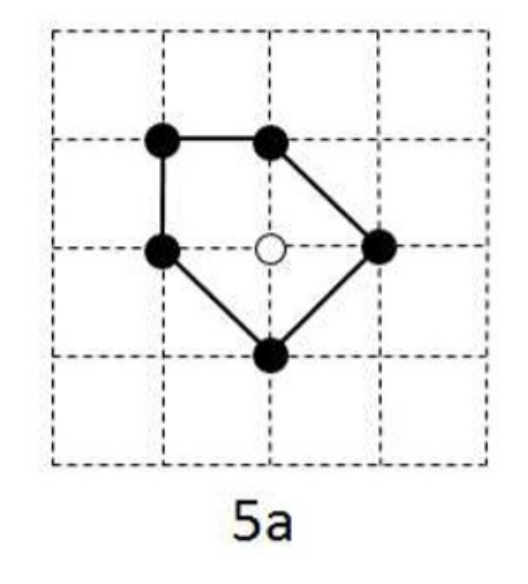}
\end{center}
we identify the matrix $$A = \begin{bmatrix}
0 & 0 & 1 & 1 & 1 & 2 \\
1 & 2 & 0 & 1 & 2 & 1
\end{bmatrix}.  $$  All edges are of lattice length one so we again focus on $\nabla_A$. However, now $c=(1,\dots,1) \in \nabla_A$, as the following code verifies.
 \begin{verbatim}
 I = ideal(y+y^2+x+x*y^2+x^2*y+x*y, 1+y^2+2*x*y+y, 1+2*y+2*x*y+x^2+x)
gens gb I
\end{verbatim}
In this case we get 2 points, the solutions of $x+y=0, y^2-y-1=0$, as singularities for $fc=y+y^2+x+xy^2+x^2y+xy$. The corresponding points of the variety are
\begin{equation}\label{equation:5a_points_that_cause_ML_degree_drop}
\begin{aligned}
(1/2(1+\sqrt{5}),1/2(3+\sqrt{5}),-1/2(1+\sqrt{5}),-1/2(3+\sqrt{5}),-2-\sqrt{5},2+\sqrt{5}),\\
(1/2(1-\sqrt{5}),1/2(3-\sqrt{5}),-1/2(1-\sqrt{5}),-1/2(3-\sqrt{5}),-2+\sqrt{5},2-\sqrt{5}).
\end{aligned}
\end{equation}
 According to Theorem \ref{theorem:ML_degree_drop}, the ML degree must drop for $5a$.
\end{example}

\begin{remark}
The singular locus of the quintic del Pezzo $S_{5a}$ consists of the two distinct points $(0,0,1,0,0,0)$ and $(0,0,0,0,0,1)$ which are both rational double points. These points are different from the two points~(\ref{equation:5a_points_that_cause_ML_degree_drop}) that cause the ML degree drop.
\end{remark}

Theorem~\ref{theorem:ML_degree_drop} characterizes scaling factors $c$ such that the ML degree of $V^c$ is less than the degree of $V$. All critical points of the likelihood function of $V^c$ lie in the intersection of $V$ with a linear space. In the rest of this section, we will investigate the ML degree drop for a given toric variety $V^c$ by studying this intersection.

Let $L_c(p)=\sum_{i=1}^n c_i p_i$ and $L_{c,i}(p)=\sum_{j=1}^n A_{ij} c_j p_j$ for $i=1,\ldots,d-1$. These polynomials are implicit versions of the polynomials $f_c$ and $\theta_i \frac{\partial f_c}{\partial \theta_i}$ for $i=1,\ldots,d-1$.
By~\cite[Proposition 7]{Carlos_etal} the ML degree of $V^c$ is the number of points $p$ in $V \backslash \mathcal{V}(p_1\cdots p_n(c_1 p_1+ \ldots + c_n p_n))$ satisfying
\begin{equation} \label{equations:linear_subspace}
(Au)_i L_c(p) = u_+ L_{c,i}(p) \quad \text{for } i=1,\ldots,d-1
\end{equation}
for generic vectors $u$.
Define $\mathcal{L}'_{c,u}$ to be the intersection of $V$ with the solution set of~(\ref{equations:linear_subspace}) and $\mathcal{L}_{c,u}$ to be the intersection of $V \backslash \mathcal{V}(p_1\cdots p_n(c_1 p_1+ \ldots + c_n p_n))$ with the solution set of~(\ref{equations:linear_subspace}).  By~\cite[Example 12.3.1]{fulton1998intersection}, the sum of degrees of the irreducible components of $\mathcal{L}'_{c,u}$ is at most $\deg V$.

The obvious reason for the  ML degree drop comes from removing these irreducible components of $\mathcal{L}'_{c,u}$ that belong to $\mathcal{V}(p_1\cdots p_n(c_1 p_1+ \ldots + c_n p_n))$. We will see in Lemma~\ref{lemma:removed_points_independent} that the irreducible components of $\mathcal{L}'_{c,u}$ that are removed do not depend on $u$  but only on $c$ and the variety $V$. In the case of the toric del Pezzo surface 5a, the ML degree drop is completely explained by this reason. The degree of this del Pezzo surface is five. The variety $\mathcal{L}'_{c,u}$ consists of two zero-dimensional components of degrees three and two. The degree two component consists of two points~(\ref{equation:5a_points_that_cause_ML_degree_drop}) that lie in the variety $\mathcal{V}(p_1\cdots p_n(c_1 p_1+ \ldots + c_n p_n))$ and hence is removed.

\begin{lemma} \label{lemma:removed_points_independent}
The points in $\mathcal{L}'_{c,u} \backslash \mathcal{L}_{c,u}$ are independent of $u$. They are exactly the points $p \in V$ that satisfy $L_c(p)=L_{c,1}(p)=\ldots=L_{c,d-1}(p)=0$.
\end{lemma}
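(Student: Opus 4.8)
The difference unwinds as follows: since $\mathcal{L}_{c,u}=\mathcal{L}'_{c,u}\setminus\mathcal V\bigl(p_1\cdots p_n(c_1p_1+\cdots+c_np_n)\bigr)$ and $c_1p_1+\cdots+c_np_n=L_c(p)$, the set $\mathcal{L}'_{c,u}\setminus\mathcal{L}_{c,u}$ is precisely the set of $p\in V$ satisfying \eqref{equations:linear_subspace} for which either some coordinate $p_k$ vanishes or $L_c(p)=0$. The plan is to prove that this set equals $V\cap\mathcal V(L_c,L_{c,1},\dots,L_{c,d-1})$, which visibly does not involve $u$; this simultaneously yields both assertions of the lemma.

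The easy inclusion I would dispatch first. If $p\in V$ satisfies $L_c(p)=L_{c,1}(p)=\cdots=L_{c,d-1}(p)=0$, then both sides of \eqref{equations:linear_subspace} vanish, so $p\in\mathcal{L}'_{c,u}$, while $L_c(p)=0$ places $p$ in $\mathcal V(p_1\cdots p_nL_c)$, so $p\notin\mathcal{L}_{c,u}$. Hence $p$ lies in the difference, and this holds for every $u$. For the reverse inclusion I would split into two cases. When $L_c(p)=0$, equation \eqref{equations:linear_subspace} forces $u_+L_{c,i}(p)=(Au)_iL_c(p)=0$ for each $i$, and since $u_+\neq0$ this gives $L_{c,i}(p)=0$ for all $i$, as wanted.

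The genuinely hard case is $L_c(p)\neq0$ together with some $p_k=0$, and the crux is to exclude such points for generic $u$. Writing $q_j=c_jp_j$ (so $q$ has the same support $S=\{j:p_j\neq0\}\subsetneq[n]$ as $p$, because $c\in(\CC^*)^n$) and normalizing $\bar q=q/L_c(p)$, the equations \eqref{equations:linear_subspace} rearrange to $A\bar q=Au/u_+$ with $\sum_j\bar q_j=1$ and $\bar q$ supported on $S$. The plan is to invoke the orbit--face correspondence for the toric variety $V$ attached to $Q$: any point of $V$ with a vanishing coordinate lies in the torus orbit of a proper face $\Gamma\subsetneq Q$, whence $S=\{j:a_j\in\Gamma\}$ and the columns $\{a_j:j\in S\}$ affinely span $\mathrm{aff}(\Gamma)$, a proper affine subspace of $\R^{d-1}$. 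Then $Au/u_+=\sum_{j\in S}\bar q_ja_j\in\mathrm{aff}(\Gamma)$. Each such $\mathrm{aff}(\Gamma)$ is cut out by equations $\langle w,x\rangle=b$, which translate into nontrivial linear conditions $\langle A^\top w-b\mathbf 1,u\rangle=0$ on $u$; as $Q$ has finitely many faces, a generic $u$ violates all of them, contradicting $Au/u_+\in\mathrm{aff}(\Gamma)$. This makes the second case vacuous, so the difference is exactly $V\cap\mathcal V(L_c,L_{c,1},\dots,L_{c,d-1})$.

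The main obstacle is exactly this last case: everything hinges on knowing which vanishing patterns of the $p_i$ can actually occur on $V$. This is where toric geometry is essential — the orbit--face correspondence guarantees that a proper support set spans only a proper affine subspace, and it is this properness, combined with the fact that $Au/u_+$ lies in the interior of $Q$ for generic (indeed positive) $u$, that rules out boundary points with $L_c(p)\neq0$. The only bookkeeping I would still need is to confirm that the genericity invoked here is compatible with the genericity already assumed for the ML degree count, which is immediate since both are dense Zariski-open conditions on $u$ and their intersection is again generic.
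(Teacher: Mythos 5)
Your proof is correct, and at the top level it follows the same decomposition as the paper's: the inclusion of $V\cap\mathcal{V}(L_c,L_{c,1},\dots,L_{c,d-1})$ into $\mathcal{L}'_{c,u}\setminus\mathcal{L}_{c,u}$ is handled identically, and in the converse direction both arguments dispose of the case $L_c(p)=0$ by dividing \eqref{equations:linear_subspace} by $u_+\neq 0$. The genuine difference is the crux, namely ruling out points with some $p_k=0$ but $L_c(p)\neq 0$: the paper outsources exactly this step, citing the proof of \cite[Theorem 13]{Carlos_etal} for the fact that every point of $\mathcal{L}'_{c,u}\setminus\mathcal{L}_{c,u}$ satisfies $L_c(p)=0$, whereas you prove it from scratch. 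Your argument --- the support of such a $p$ equals $\{j:a_j\in\Gamma\}$ for a proper face $\Gamma$ of $Q$ by the orbit--face correspondence, the normalized equations force $Au/u_+\in\mathrm{aff}(\Gamma)$, and this is impossible because $Au/u_+$ lies in the relative interior of $Q$ for positive $u$ (or, Zariski-generically, because the linear condition on $u$ is nontrivial: not all columns $a_j$ lie on a common hyperplane) --- is sound, and is essentially the toric moment-map argument one would expect to underlie the cited result. What your route buys is a self-contained proof that also makes transparent where genericity (indeed mere positivity) of $u$ enters: only in the ``exactly'' half of the lemma, while the containment of $V\cap\mathcal{V}(L_c,L_{c,1},\dots,L_{c,d-1})$ in the difference holds for every $u$. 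What the paper's route buys is brevity. Two minor points to tidy in your write-up: properness of $\mathrm{aff}(\Gamma)$ in $\R^{d-1}$ uses that $Q$ is full-dimensional (implicit in the paper's setup), and since the $\bar q_j$ are complex you should pass to real parts before concluding $Au/u_+\in\mathrm{aff}(\Gamma)$ --- harmless, as the $a_j$ are real and $\sum_{j\in S}\mathrm{Re}(\bar q_j)=1$.
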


\begin{proof}
Any $p \in V$ satisfying $L_c(p)=L_{c,1}(p)=\ldots=L_{c,d-1}(p)=0$ is in $\mathcal{L}'_{c,u} \backslash \mathcal{L}_{c,u}$ for any $u$. Conversely, by the proof of~\cite[Theorem 13]{Carlos_etal}, if $p \in \mathcal{L}'_{c,u}\backslash \mathcal{L}_{c,u}$, then $L_c(p)=0$. It follows from the equations~(\ref{equations:linear_subspace}) that then also $L_{c,i}(p)=0$ for $i=1,\ldots,d-1$. But then $p$ satisfies equations~(\ref{equations:linear_subspace}) for any $u$.
\end{proof}

The more complicated reason for the ML degree drop can be the nontransversal intersection of $V$ and the linear subspace defined by~(\ref{equations:linear_subspace}). We recall that two projective varieties $A,B \subseteq \mathbb{P}^n$ intersect transversally at $p \in A \cap B$ if $p$ is a smooth point of $A,B$ and
$$
T_p A + T_p B = T_p \mathbb{P}^n.
$$
The intersection of $A$ and $B$ is generically transverse if it is transverse at a general point of every component of $A\cap B$. If the intersection of $V$ and the linear subspace defined by~(\ref{equations:linear_subspace}) is not generically transverse, the sum of degrees of the irreducible components of $\mathcal{L}'_{c,u}$ can be less than $\deg (V)$, in which case also the ML degree of the toric variety $V^c$ is less than the degree of the toric variety $V$.  One could think that the intersection of $V$ and the linear subspace defined by~(\ref{equations:linear_subspace}) is generically transverse for generic vectors $u$, but since the linear subspace defined by~(\ref{equations:linear_subspace}) depends on the variety $V$, then the intersection is not necessarily generically transverse. We will see several such examples later in this section and in Section~\ref{section:phylogenetic_models}. We note that the sum of degrees of the irreducible components can be less that $\deg V$ even if the degrees are counted with multiplicity as in~\cite[Example 12.3.1]{fulton1998intersection}.

\begin{corollary} \label{corollary:bound_on_ML_degree_drop}
The ML degree drop $\deg (V) - \mldeg (V^c)$ is bounded below by the sum of degrees of the irreducible components of the intersection of $V$ and the linear subspace defined by  $L_c(p)=L_{c,1}(p)=\ldots=L_{c,d-1}(p)=0$. If the intersection of $V$ and the linear subspace defined by~(\ref{equations:linear_subspace}) is generically transverse, then this bound is exact.
\end{corollary}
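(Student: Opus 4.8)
The plan is to combine the refined B\'ezout bound already quoted from \cite[Example 12.3.1]{fulton1998intersection} with the description of the removed points furnished by Lemma~\ref{lemma:removed_points_independent}. Write $\Lambda_{c,u}$ for the linear subspace cut out by~(\ref{equations:linear_subspace}) and $Z$ for the intersection of $V$ with the smaller linear subspace $\{L_c=L_{c,1}=\ldots=L_{c,d-1}=0\}$; note that $Z\subseteq \mathcal{L}'_{c,u}=V\cap\Lambda_{c,u}$, since the vanishing of $L_c$ and of all the $L_{c,i}$ forces~(\ref{equations:linear_subspace}). By \cite[Proposition 7]{Carlos_etal}, for generic $u$ the degree $\mldeg(V^c)$ equals the number of points of $\mathcal{L}_{c,u}$, and by Lemma~\ref{lemma:removed_points_independent} the removed part $\mathcal{L}'_{c,u}\setminus\mathcal{L}_{c,u}$ is exactly $Z$, independent of $u$. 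Since every point of $Z$ lies in $\mathcal{V}(p_1\cdots p_n(c_1 p_1+\ldots+c_n p_n))$ while no point of $\mathcal{L}_{c,u}$ does, the two sets are disjoint and $\mathcal{L}'_{c,u}=\mathcal{L}_{c,u}\sqcup Z$.

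First I would observe that this disjointness splits the irreducible components of $\mathcal{L}'_{c,u}$ cleanly: each component either is contained in $\mathcal{V}(p_1\cdots p_n(c_1 p_1+\ldots+c_n p_n))$---hence, by the lemma, in $Z$---or has its generic point off this hypersurface, hence is one of the finitely many reduced points of $\mathcal{L}_{c,u}$ (their finiteness being exactly the statement that $\mldeg(V^c)$ is finite for generic $u$). Consequently the sum of degrees of the irreducible components of $\mathcal{L}'_{c,u}$ equals $\mldeg(V^c)+\sum_{W}\deg W$, the last sum running over the irreducible components $W$ of $Z$. Substituting this into the inequality ``sum of degrees of components of $\mathcal{L}'_{c,u}$ is at most $\deg V$'' from \cite[Example 12.3.1]{fulton1998intersection} yields
$$\mldeg(V^c)+\sum_W \deg W \;\le\; \deg V,$$
which is the asserted lower bound on the drop $\deg(V)-\mldeg(V^c)$.

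For the exactness clause I would use that generic transversality of $V\cap\Lambda_{c,u}$ forces the intersection to be proper and reduced: since $\dim V=d-1$ and $\codim\Lambda_{c,u}=d-1$ are complementary, transversality at the generic point of a component forces that component to be zero-dimensional, so $\mathcal{L}'_{c,u}$ is a finite reduced set and every intersection multiplicity is $1$. Then the refined B\'ezout inequality becomes the equality $\#(V\cap\Lambda_{c,u})=\deg V$ (the intersection number of $V$ with a linear space of complementary dimension being $\deg V$), and since $\#(V\cap\Lambda_{c,u})=\mldeg(V^c)+\#Z=\mldeg(V^c)+\deg Z$, the bound is attained.

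The main obstacle is the exactness step rather than the inequality: one must be careful that the transversality hypothesis is imposed on the full codimension-$(d-1)$ space $\Lambda_{c,u}$ from~(\ref{equations:linear_subspace}), which depends on $u$, whereas the bounding quantity $\deg Z$ comes from the fixed codimension-$d$ space $\{L_c=L_{c,1}=\ldots=L_{c,d-1}=0\}$ independent of $u$. Verifying that generic transversality of the former indeed collapses all of $\mathcal{L}'_{c,u}$ to reduced points and upgrades Fulton's inequality to a B\'ezout equality is the delicate point, and is the only place where the transversality assumption is genuinely used.
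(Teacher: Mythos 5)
Your proposal is correct and takes essentially the same route as the paper's proof: both decompose $\mathcal{L}'_{c,u}$ into the finite set $\mathcal{L}_{c,u}$ (whose cardinality is $\mldeg(V^c)$) and the removed locus identified by Lemma~\ref{lemma:removed_points_independent}, apply the bound from \cite[Example 12.3.1]{fulton1998intersection} to get the inequality, and use generic transversality of $V$ with the linear space~(\ref{equations:linear_subspace}) to upgrade it to an equality. Your write-up simply makes explicit two points the paper leaves implicit, namely the clean component-wise splitting of $\mathcal{L}'_{c,u}$ and the argument that transversality in complementary dimension forces a finite reduced intersection of cardinality $\deg V$.
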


In Corollary~\ref{corollary:bound_on_ML_degree_drop}, we consider only irreducible components whose ideals are different from $\langle p_1,\ldots,p_n \rangle$ as we work over the projective space.

\begin{proof}
The sum of degrees of the irreducible components of $\mathcal{L}'_{c,u}$ is at most $\deg (V)$ by~\cite[Example 12.3.1]{fulton1998intersection} and the number of elements of $\mathcal{L}_{c,u}$ is $\mldeg (V^c)$. By Lemma~\ref{lemma:removed_points_independent}, we obtain $\mathcal{L}_{c,u}$ from $\mathcal{L}'_{c,u}$ by removing all the irreducible components that satisfy $L_c(p)=L_{c,1}(p)=\ldots=L_{c,d-1}(p)=0$. Hence the difference of the sum of degrees of the irreducible components of $\mathcal{L}'_{c,u}$ and the number of elements of $\mathcal{L}_{c,u}$ is the sum of degrees of the irreducible components of the intersection of $V$ and the linear subspace defined by  $L_c(p)=L_{c,1}(p)=\ldots=L_{c,d-1}(p)=0$. If $V$ and the linear subspace defined by~(\ref{equations:linear_subspace}) intersect generically transversely, then the sum of degrees of the irreducible components of $\mathcal{L}'_{c,u}$ is equal to $\deg V$.
\end{proof}

To understand the above observations, we analyze the different ML degree drops corresponding  to the quadratic Veronese embedding of $\PP^2$ given by the Fano polytope in Figure~\ref{polytope}.

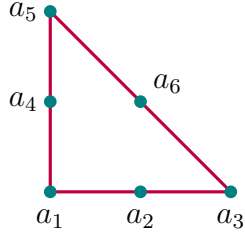
\begin{figure}[ht]

 \begin{center} \begin{tikzpicture}[scale=1.2]
\draw [purple,very thick](0,0) -- (0,2);
\draw [purple,very thick](0,0) -- (2,0);
\draw [purple,very thick](2,0) -- (0,2);

\node at (0,-.3) {$a_1$};
\node at (1,-.3) {$a_2$};
\node at (2,-.3) {$a_3$};
\node at (-.3,1) {$a_4$};
\node at (-.3,2) {$a_5$};
\node at (1.3,1.2) {$a_6$};

\fill[teal] (0,0) circle[radius=2pt];
\fill[teal] (0,1) circle[radius=2pt];
\fill[teal] (1,0) circle[radius=2pt];
\fill[teal] (1,1) circle[radius=2pt];
\fill[teal] (2,0) circle[radius=2pt];
\fill[teal] (0,2) circle[radius=2pt];
\end{tikzpicture}
\end{center}\caption{Polytope $Q$ corresponding to the smooth Fano variety $\PP^2$} \label{polytope}
\end{figure}

In \cite[Example 26]{Carlos_etal}, it was shown that different scalings $c \in \R^6$ produce ML degrees ranging from 1 to 4, under several combinations of the vector $c$ lying on each of the discriminants making up the principal $A$-determinant, defined by
\begin{align} \label{VerE_A}
E_A(c)&= \Delta_A(c) \cdot \Delta_{[a_1\; a_2\; a_3]}(c) \cdot \Delta_{[a_3\; a_5\; a_6]}(c) \cdot \Delta_{[a_1\; a_4\; a_5]}(c) \nonumber \\
&=\det (C) \det \begin{bmatrix}
2c_{00} & {c_{10}}  \\ {c_{10}} & 2c_{20}
\end{bmatrix}\det\begin{bmatrix}
2c_{20} & {c_{11}}  \\ {c_{11}} & 2c_{02}
\end{bmatrix}\det\begin{bmatrix}
2c_{00} & {c_{01}}  \\ {c_{01}} & 2c_{02}
\end{bmatrix},
\end{align}
where $C = \begin{bmatrix}
2c_{00} & {c_{10}} & {c_{01}} \\ {c_{10}} & 2c_{20} & {c_{11}} \\ {c_{01}} & {c_{11}} & 2c_{02}
\end{bmatrix}$.
The different combinations are presented in \cite[Table 2]{Carlos_etal}, which we reproduce here in Table \ref{table:Ver} (while fixing some typos). In each line, we go further than identifying a possible drop and actually explain the exact drops observed.

\begin{table}[ht]
\centering
\begin{tabular}{@{} l *8c @{}}
 \multicolumn{1}{c}{{ $C$}}    & { $\Delta_A$}  &   { $\Delta_{[a_1\; a_2\; a_3]}$ } &   { $\Delta_{[a_3\; a_5\; a_6]}$ } &   {  $\Delta_{[a_1\; a_4\; a_5]}$ } &   { ${\mldeg}$ }  \\
 \hline \\
   {$\begin{bmatrix} 2 & 1 & 1 \\ 1 & 2 & 1 \\ 1 & 1 & 2 \end{bmatrix}$}  & $\neq0$ & $\neq0$& $\neq0$& $\neq0$& \textbf{ 4}\vspace{2mm}\\
  $\begin{bmatrix} 2 & 2 & 1 \\ 2 & 2 & 3 \\ 1 & 3 & 2 \end{bmatrix}$  & $\neq0$ & 0& $\neq0$& $\neq0$& \textbf{ 3}\vspace{2mm}\\
     $\begin{bmatrix} 2 & 2 & 1 \\ 2 & 2 & 2 \\ 1 & 2 & 2 \end{bmatrix}$ & $\neq0$ & 0 & 0& $\neq0$& \textbf{ 2}\vspace{2mm}\\
   {\small$\begin{bmatrix} \text{-}2 & 2 & 2 \\ 2 & \text{-}2 & 2 \\ 2 & 2 & \text{-}2 \end{bmatrix}$}  & $\neq0$ & 0& 0& 0 & \textbf{ 1}\vspace{2mm}\\
   {\small$\begin{bmatrix} 17 & 22 & 27 \\ 22 & 29 & 36 \\ 27 & 36 & 45 \end{bmatrix}$} & 0 & $\neq0$ & $\neq0$ & $\neq0$& \textbf{ 3}\vspace{2mm}\\
  $\begin{bmatrix} 2 & 3 & 3 \\ 3 & 5 & 5 \\ 3 & 5 & 5 \end{bmatrix}$ & 0 & $\neq0$ & 0& $\neq0$& \textbf{ 2}\vspace{2mm}\\
  $\begin{bmatrix} 2 & 2 & 2 \\ 2 & 2 & 2 \\ 2 & 2 & 2 \end{bmatrix}$ & 0 & 0 & 0& 0 & \textbf{ 1}\vspace{2mm}\\
 \end{tabular}\vspace{1mm}
\caption{ML degrees for different scalings $c_{ij}$ in the matrix $C$.} \label{table:Ver}
 \end{table}

Naively, each appearance of a $0$ in a row of Table~\ref{table:Ver} makes the ML degree drop by $1$. But this cannot be, since the last row has all four zeros and the ML degree cannot drop to $0$. We will see in the explanation of the last two rows that it is in general impossible to predict the exact drop  just from knowing in what discriminants the vector $c$ lies.

\begin{itemize}

\item \textbf{Row 1} This corresponds to the generic case. The intersection $\mathcal{L}'_{c,u}$ is transverse and zero-dimensional with 4 points, corresponding to the ML degree. There are no points in $\mathcal{L}'_{c,u} \backslash \mathcal{L}_{c,u}$ and there is no drop.

\item \textbf{Row 2} When computing the points in $V_A \cap \{ L_{c}(p)=L_{c,1}(p)=\ldots=L_{c,d-1}(p)=0 \}$ we obtain the unique projective point $[1:-1:1:0:0:0]$, which makes the $A$-discriminant of the edge $[a_1,a_2,a_3]$ of $Q$ vanish. Removing this point gives the ML degree of $3 = 4 - 1$.

\item \textbf{Row 3} Now we have one more point in the removal set: apart from the one in the above row, there is also $[0:0:1:0:1:-1]$ on the zero locus of the $A$-discriminant of the edge $[a_3,a_5,a_6]$. The drop is accounted for exactly these two points and we have ML degree $2 = 4 - 2$.

\item \textbf{Row 4} There are three points in $\mathcal{L}'_{c,u} \backslash \mathcal{L}_{c,u}$ that lie on the zero loci of edge $A$-discriminants: $[1:1:1:0:0:0]$ for the edge $[a_1,a_2,a_3]$,  $[0:0:1:0:1:1]$ for the edge $[a_3,a_5,a_6]$ and $[1:0:0:1:1:0]$ for the edge $[a_1,a_4,a_5]$. They explain the drop in ML degree $1 = 4 - 3 $.

\item \textbf{Row 5} The only removal point is $[1:-2:4:1:1:-2]$, which does \textit{not} lie on zero loci of any of the $A$-discriminants of the edges of $Q$, but only on the zero locus of the $A$-discriminant of the whole of $Q$. Removing this point gives the ML degree of $3 = 4 - 1$.

\item \textbf{Row 6} This is the first time that the removal ideal $I_A + \left\langle L_{c}(p),L_{c,1}(p),\ldots,L_{c,d-1}(p) \right\rangle$ is not radical. While there is only one point, $[0:0:1:0:1:-1]$, its multiplicity is 2. We used the \texttt{Macaulay2} package \texttt{SegreClasses}~\cite{harris2020segre} to compute the multiplicity. The intersection $\mathcal{L}'_{c,u}$  is zero-dimensional but consists of two components of degree 2. The first component is prime and corresponds to the two points in the ML degree $2$. The toric variety and the linear space defined by~\eqref{equations:linear_subspace} intersect transversely at both points of the first component. The second component is primary, but not prime. Its radical $\langle p_6 + p_5, p_4, p_3 + p_5, p_2, p_1 \rangle$ is a zero-dimensional ideal of degree $1$, corresponding to the above point that lies on the zero locus of the $A$-discriminant of the edge $[a_3,a_5,a_6]$.

Although the intersection of the toric variety and the linear space defined by~\eqref{equations:linear_subspace} is dimensionally transverse, it is not transverse at the point defined by the second component. We also observe that while $\Delta_A(c)=0$, there is no singular point of $f_c$, which means $c$ lies strictly in the closure in Definition~\ref{def:Adisc} (see Remark \ref{rmk:row6} below).

\item \textbf{Row 7} Now the removal ideal is 1-dimensional of degree 2. It is given by $$\left\langle p_2^2 + p_2p_3 + p_3p_4 \, , \, p_1+p_2+p_4 \, , \, p_2+p_5-p_2-p_3 \, , \, p_2 + p_3 + p_6 \right\rangle.$$ Its variety intersects $V_{\Gamma \cap A}$ in one point for each edge $\Gamma$ of $Q$. In other words, the reason why all discriminants $\Delta_{[a_1\; a_2\; a_3]},  \Delta_{[a_3\; a_5\; a_6]}, \Delta_{[a_1\; a_4\; a_5]}$ vanish is that the removal set intersects the planes $p_1=p_2=p_4=0 \, , \, p_2=p_3=p_6=0 $ and $p_4=p_5=p_6=0$ respectively, and one can find in each a point with complementary support. Furthermore, it intersects the open set where none of the $p_i$ are zero, which explains why $\Delta_A=0$ too. Unfortunately, this alone does not explain why the ML degree is 1.

By looking at the intersection ideal of the toric variety $V_A$ with the equations \eqref{equations:linear_subspace}, we realize that the intersection is not transverse (not even dimensionally transverse). Indeed, there are two components: a zero-dimensional component of degree 1 (corresponding to the MLE) and a one-dimensional component of degree 2 (so the sum of the degrees is $1+2=3<4$). This last component matches the removal ideal above. At the 0-dimensional component the toric variety intersects the linear space defined by~\eqref{equations:linear_subspace} transversely. At a generic point of the $1$-dimensional component the intersection is not transverse. Both components have multiplicity one and hence also the sum of degrees counted with multiplicity is less than four.
\end{itemize}

\begin{remark}\label{rmk:row6}
If for some scaling $c$, the $A$-discriminant of at least one edge is zero and the $A$-discriminant of at least one edge is non-zero, then there is no singular point $\theta \in \left(\mathbb{C}^{*}\right)^2$ of $f_c$. Indeed, such a point $\theta=(\theta_1,\theta_2) \in \left(\mathbb{C}^{*}\right)^2$ would need to satisfy
\begin{equation}\label{system} \begin{bmatrix}
2c_{00} & {c_{10}} & {c_{01}} \\ {c_{10}} & 2c_{20} & {c_{11}} \\ {c_{01}} & {c_{11}} & 2c_{02}
\end{bmatrix} \begin{bmatrix}
1 \\ \theta_1 \\ \theta_2
\end{bmatrix} = \begin{bmatrix}
0 \\ 0 \\ 0
\end{bmatrix}.
\end{equation}
Say $\Delta_{[a_3\; a_5\; a_6]} =$ \begin{small}$\det\begin{bmatrix}
2c_{20} & {c_{11}}  \\ {c_{11}} & 2c_{02}
\end{bmatrix}=0$ \end{small}. In order for the system \eqref{system} to be consistent, the rank of \begin{small}$\begin{bmatrix}
 {c_{10}} & 2c_{20} & {c_{11}} \\ {c_{01}} & {c_{11}} & 2c_{02}
\end{bmatrix}$\end{small} should be $1$. In particular, $\det\begin{bmatrix}
{c_{10}} & {c_{01}}  \\ {2c_{20}} & c_{11}
\end{bmatrix}=0$ and  $\det\begin{bmatrix}
{c_{10}} & {c_{01}}  \\ {c_{11}} & 2c_{02}
\end{bmatrix}=0$, which in turn force \begin{small}$\begin{bmatrix}
2c_{00} & {c_{10}} & {c_{01}} \\ {c_{10}} & 2c_{20} & {c_{11}}
\end{bmatrix}$\end{small} and  \begin{small}$\begin{bmatrix}
2c_{00} & {c_{10}} & {c_{01}} \\ {c_{01}} & {c_{11}} & {2c_{02}}
\end{bmatrix}$\end{small} to have rank $1$. We conclude that $C$ itself must have rank $1$, so that $c$ must lie in the discriminants of all the edges (as in Row 7), contradicting that one of them was non-zero. This means that if we are in the case of Row 6, even though $\Delta_A(c)=0$, the scaling $c \in \nabla_A$ is added in the closure that appears in Definition \ref{def:Adisc}.
\end{remark}

\begin{conjecture}
The intersection of $V$ and the linear space given by~\eqref{equations:linear_subspace} is generically transverse at the irreducible component of $\mathcal{L}'_{c,u}$ that gives the MLE.
\end{conjecture}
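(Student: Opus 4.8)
The plan is to prove the statement in the statistically meaningful regime, where $c \in \R_{>0}^n$ so that $\mathcal{M}_A = V^c \cap \Delta_{n-1}$ is a genuine log-linear model and $u$ is a generic positive data vector; here the maximum likelihood estimate $\hat p$ is the unique critical point of $L_u$ in the open simplex, and ``the irreducible component that gives the MLE'' is the reduced point $\hat p$. First I would record that $\hat p$ is a smooth point of $V$: it has all coordinates positive, hence lies in the dense torus orbit, which is the image of $\psi^c$ and is smooth of projective dimension $d-1$. Since the linear space $\Lambda$ cut out by~\eqref{equations:linear_subspace} is defined by the $d-1$ forms $g_i := u_+ L_{c,i} - (Au)_i L_c$, its tangent space at $\hat p$ has dimension $n-d$, complementary to $\dim T_{\hat p} V = d-1$. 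Consequently transversality at $\hat p$ is equivalent to the nonvanishing of the determinant of the $(d-1)\times(d-1)$ pairing matrix $M_{ik} := d g_i(v_k)$, where $v_1,\dots,v_{d-1}$ span the projectivized tangent space $T_{\hat p}V$.

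Next I would compute this pairing matrix explicitly at the MLE. Taking the torus directions $(v_k)_j = A_{kj}\,\hat p_j$ as a basis of $T_{\hat p}V$ and writing $q_j := c_j \hat p_j$ for the fitted probabilities, a direct differentiation of $g_i$ gives $M_{ik} = \sum_j (u_+ A_{ij} - (Au)_i)\, A_{kj}\, q_j$. Here Birch's Theorem (Proposition~\ref{Birch}) enters: at $\hat p$ one has $u_+ \sum_j A_{ij} q_j = q_+ (Au)_i$, where $q_+ = \sum_j q_j = L_c(\hat p)$. Substituting this relation simplifies $M$ to the positive scalar multiple $M = u_+ q_+\, \Sigma$ of the covariance matrix $\Sigma_{ik} = \sum_j A_{ij}A_{kj}\, \tilde q_j - \big(\sum_j A_{ij}\tilde q_j\big)\big(\sum_j A_{kj}\tilde q_j\big)$ of the sufficient statistics (the rows of $A$) under the normalized fitted distribution $\tilde q = q/q_+$. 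The same computation shows $g_i(\hat p) = 0$, confirming both that $\hat p \in \Lambda$ and that the $g_i$ annihilate the Euler direction $\hat p$, so that working in $T_{\hat p}V/\langle \hat p\rangle$ is legitimate.

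Finally I would invoke positive-definiteness. The matrix $\Sigma$ is the Fisher information of the exponential family $\mathcal{M}_A$ at the interior point $\tilde q$ (all coordinates positive); it is positive semidefinite in general and positive definite precisely when the columns $a_1,\dots,a_n$ of $A$ affinely span $\R^{d-1}$, which holds because $Q$ is a full-dimensional polytope. Hence $\det M = (u_+ q_+)^{\,d-1}\det\Sigma \neq 0$, the differentials $dg_i|_{T_{\hat p}V}$ are linearly independent, and $V$ meets $\Lambda$ transversally at $\hat p$, which is exactly the assertion.

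I expect the main obstacle to be twofold. The genuinely substantive step is the identification in the second paragraph translating the algebraic transversality condition ($\det M \neq 0$) into the statistical positive-definiteness of the Fisher information; making this precise requires the projective bookkeeping (verifying $g_i(\hat p) = 0$ and passing to the quotient $T_{\hat p}V/\langle \hat p\rangle$) to be carried out carefully. The second, more serious, issue is the scope of the conjecture: as stated it allows arbitrary $c \in (\CC^*)^n$, including complex scalings for which no positive real ``MLE'' exists, as in several rows of Table~\ref{table:Ver}. For those cases the concavity argument above does not apply, and one would instead need a purely algebraic replacement showing that the distinguished component has intersection multiplicity one; since $V$ and $\Lambda$ are smooth there, multiplicity one would again yield transversality, but establishing it without the concavity of the log-likelihood is the hard part.
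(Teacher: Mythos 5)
First, a point of calibration: the paper does not prove this statement at all. It is stated as a conjecture, and the only evidence the paper offers is verification on the examples of Sections~\ref{section:ML_degree_drop} and~\ref{section:phylogenetic_models} (``This conjecture holds for all the examples considered in this section''). So there is no proof of the paper's to compare yours against; your proposal has to be judged as an attack on an open statement. Within the regime you restrict to --- $c$ real and positive, $u$ generic positive --- your argument is correct and clean. The MLE point $\hat p$ lies in the dense torus orbit, hence is a smooth point of $V$; Birch's theorem (Proposition~\ref{Birch}) gives $g_i(\hat p)=0$, so the Euler direction can be quotiented out; the pairing matrix is exactly $M = u_+ q_+ \Sigma$ with $\Sigma$ the covariance matrix of the rows of $A$ under the fitted distribution $\tilde q$; and $\Sigma$ is positive definite because $\tilde q$ has full support and the columns of $A$ affinely span $\R^{d-1}$ (the polytope $Q$ is full-dimensional). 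Note also that $\det M \neq 0$ retroactively forces the forms $g_i$ to be linearly independent, so the dimension count you assert closes, and transversality at $\hat p$ makes $\{\hat p\}$ an isolated reduced component of $\mathcal{L}'_{c,u}$, so ``the component that gives the MLE'' is indeed the point itself. This already covers $c=(1,\dots,1)$, hence every del Pezzo and phylogenetic example in the paper, as well as six of the seven rows of Table~\ref{table:Ver}, whose scalings are positive --- strictly more than the paper establishes.

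The genuine gap is the one you name yourself in your final paragraph, and it should not be minimized: the conjecture quantifies over arbitrary scalings $c \in (\CC^*)^n$, and the paper's evidence explicitly includes a case your method cannot touch --- the fourth row of Table~\ref{table:Ver}, where $c$ has negative entries, the ML degree is $1$, and the paper still asserts transversality at the distinguished component. Your argument uses positivity twice in an essential way: once to guarantee that the MLE exists, is real, and lies in the torus (interiority), and once for positive definiteness of the Fisher information $\Sigma$; neither survives a sign change or a complex scaling, and for such $c$ even the phrase ``the component that gives the MLE'' needs a definition (the distinguished critical point need not be a positive real point). A purely algebraic replacement --- e.g.\ showing the distinguished component always appears with intersection multiplicity one, from which transversality would follow at a smooth point --- is exactly what is missing, and is presumably why the authors left the statement as a conjecture. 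In short: correct and worthwhile partial progress on the statistically meaningful case, but not a proof of the statement as it stands.
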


This conjecture holds for all the examples considered in this section. At other irreducible components the intersection may or may not be transverse.

\begin{remark}
For toric models,
\begin{equation}\label{eq:mltop}
\mldeg(V) = \chi_{\text{top}}(V \backslash H) = \chi_{\text{top}}(V) - \chi_{\text{top}}(V \cap H)
\end{equation}
 where $H = \{p\, | \, p_1  \cdots  p_n (p_1 + \dots + p_n)= 0 \}$
and $\chi_{\text{top}}$ is the topological Euler characteristic. The first equality was proved by Huh and Sturmfels \cite[Theorem 3.2]{Huh_Sturmfels}, and the second equality is the excision formula. The Euler characteristic of toric del Pezzo surfaces can be computed by $\chi_{\text{top}}(V) = 2 + \text{rkPic}(V)$. The rank of the Picard group $\text{rkPic}(V)$ can be computed taking into account that the minimal resolution of every del Pezzo surface is a product of two projective lines $\mathbb{P}^1 \times \mathbb{P}^1$ (polytope 8a in Table~\ref{table:polytopes}), the quadric cone $\mathbb{P}(1,1,2) \subset \mathbb{P}^3$ (polytope 8c in Table~\ref{table:polytopes}), or the blow-up of a projective plane in $9-d$ points in almost general position; namely at most three of which are collinear, at most six of which lie on a conic, and at most eight of them on a cubic having a node at one of the points. We refer the reader to \cite{Dolgachev} for a more detailed study of this classical subject of algebraic geometry. It would be interesting to explore the ML degree drop further from this perspective.
\end{remark}

\section{Toric fiber products and phylogenetic models} \label{section:phylogenetic_models}

In~\cite{Wisniewski_Buczynska_2007}, Buczy\'{n}ska-Wi\'{s}niewski study an infinite family of toric Fano varieties that correspond to 3-valent phylogenetic trees. These Fano varieties are of index $4$ with Gorenstein terminal singularities. In phylogenetics, these varieties correspond to the CFN model in the Fourier coordinates. We define them through the corresponding polytopes.

\begin{definition} \label{def:phylogenetic_models}
Let $\mathcal{T}$ be a 3-valent tree, i.e. every vertex of $\mathcal{T}$ has degree $1$ or $3$. Consider all labelings of the edges of $\mathcal{T}$ with $0$'s and $1$'s such that at every inner vertex the sum of labels on the incident edges is even. Define $P_{\mathcal{T}} \subseteq \R^{E}$ to be the convex hull of such labelings. Let $I_{\mathcal{T}}$ be the homogeneous ideal and $V_{\mathcal{T}}$ be the projective toric variety corresponding to $P_{\mathcal{T}}$.
\end{definition}

\begin{example}
If $\mathcal{T}$ is tripod, then $P_{\mathcal{T}}=\conv((0,0,0),(1,1,0),(1,0,1),(0,1,1))$ is a simplex and $V_{\mathcal{T}}=\mathbb{P}^3$ is the $3$-dimensional projective space.
\end{example}

\begin{example}
If $\mathcal{T}$ is the unique 3-valent four leaf tree, then
\begin{align*}
P_{\mathcal{T}}=\conv(&(0,0,0,0,0),(1,1,0,0,0),(0,0,0,1,1),(1,1,0,1,1),\\
&(1,0,1,1,0),(1,0,1,0,1),(0,1,1,1,0),(0,1,1,0,1)).
\end{align*}
The ideal $I_{\mathcal{T}}$ is generated by the two quadratic polynomials $x_{00000}x_{11011}-x_{11000}x_{00011}$ and $x_{10110}x_{01101}-x_{01110}x_{10101}$.
\end{example}

The aim of the rest of the section is to show that if $\mathcal{T}$ is a 3-valent tree then the ML degree of the variety $V_{\mathcal{T}}$ is one. We will also give a closed form for its maximum likelihood estimate. This result will be a special case of a more general result about ML degrees
 of codimension-0 toric fiber products of toric ideals. A toric fiber product can be defined for any two ideals that are homogenous by the same multigrading~\cite{sullivant2007toric}, however, we use the definition specific to toric ideals~\cite[Section 2.3]{engstrom2014multigraded}. Besides phylogenetic models considered in this section, codimension-0 toric fiber products appear in general group-based models in the Fourier coordinates and reducible hierarchical models (see~\cite[Section 3]{sullivant2007toric} for details on applications).

Let $r \in \N$ and $s_i, t_i \in \N$ for $1 \leq i \leq r$. We consider toric ideals corresponding to vector configurations $\mathcal{B}=\{b^i_j:i \in [r], j \in [s_i]\} \subseteq \Z^{d_1}$ and $\mathcal{C}=\{c^i_k:i \in [r], k \in [t_i]\} \subseteq \Z^{d_2}$. These toric ideals are denoted by $I_{\mathcal{B}}$ and $I_{\mathcal{C}}$, they live in the polynomial rings $\R[x^i_j:i \in [r], j \in [s_i]]$ and $\R[y^i_k:i \in [r], k \in [t_i]]$, and they are required to be homogeneous with respect to the multigrading by $\mathcal{A}=\{a^i:i \in [r]\} \subseteq \Z^d$. We assume that there exists $\omega \in \Q^d$ such that $\omega a^i=1$ for all $i$, so that $I_{\mathcal{B}}$ and $I_{\mathcal{C}}$ are homogeneous also with respect to the standard grading. Toric ideals $I_{\mathcal{B}}$ and $I_{\mathcal{C}}$ being homogeneous with respect to the multigrading by $\mathcal{A}$ implies that there exist linear maps $\pi_1: \Z^{d_1} \rightarrow \Z^d$ and $\pi_2: \Z^{d_2} \rightarrow \Z^d$ such that $\pi_1(b^i_j)=a^i$ and $\pi_2(c^i_k)=a^i$. We define the vector configuration
$$
\mathcal{B} \times_{\mathcal{A}} \mathcal{C} = \{(b^i_j,c^i_k):i\in [r],j \in [s_i], k \in [t_i]\}.
$$
The toric fiber product of $I_{\mathcal{B}}$ and $I_{\mathcal{C}}$ with respect to the multigrading by $\mathcal{A}$ is defined as
$$
I_{\mathcal{B}} \times_{\mathcal{A}} I_{\mathcal{C}} := I_{\mathcal{B} \times_{\mathcal{A}} \mathcal{C}},
$$
and it lives in the polynomial ring $\R[z^i_{jk}:i \in [r], j \in [s_i], k \in [t_i]]$.

\begin{example}
Let $\mathcal{T}$ be a 3-valent tree with $n \geq 4$ leaves. Write $\mathcal{T}$ as a union of two trees $\mathcal{T}_1$ and $\mathcal{T}_2$ that share an interior edge $e$. Take $r=2$. Let $b^1_j$ be the vertices of $P_{{\mathcal{T}}_1}$ that have label $0$ on edge $e$ and let $b^2_j$ be the vertices of $P_{{\mathcal{T}}_1}$ that have label $1$ on edge $e$. Define similarly $c^1_k$ and $c^2_k$ for $P_{{\mathcal{T}}_2}$. Let $\mathcal{A}=\{(0,1),(1,0)\}$, so that $\pi_1$ maps $b^1_j$ to $(0,1)$ and $b^2_j$ to $(1,0)$. Then $I_{\mathcal{T}}$ is the toric fiber product of $I_{{\mathcal{T}}_1}$ and $I_{{\mathcal{T}}_2}$ with respect to multigrading by $\mathcal{A}$. In~\cite[Section 3.4]{sullivant2007toric} the toric fiber product construction is explained in full generality for group-based phylogenetic models in the Fourier coordinates.
\end{example}

Given a vector $u$ indexed by the elements of $\mathcal{B} \times_{\mathcal{A}} \mathcal{C}$, we denote its entries by $u^i_{jk}$ for $i\in [r],j \in [s_i], k \in [t_i]$. We define $u^i_{++} = \sum_{j \in [s_i], k \in [t_i]} u^i_{jk}$,  $u^i_{j+} = \sum_{k \in [t_i]} u^i_{jk}$ and $u^i_{+k} = \sum_{j \in [s_i]} u^i_{jk}$. We denote the corresponding vectors by $u_{\mathcal{A}},u_{\mathcal{B}}$ and $u_{\mathcal{C}}$ since they are indexed by the elements of $\mathcal{A}, \mathcal{B}$ and $\mathcal{C}$. These vectors $u_{\mathcal{A}} = (u^i_{++})_{i\in [r]}$, $u_{\mathcal{B}} = (u^i_{j+})_{i\in [r],j \in [s_i]}$ and $u_{\mathcal{C}}=(u^i_{+k})_{i\in [r], k \in [t_i]}$ are marginal sums. We also define  $u^+_{++} = \sum_{i \in [r], j \in [s_i], k \in [t_i]} u^i_{jk}$, $(u_{\mathcal{B}})_{+}^{+} = \sum_{i\in [r]} \sum_{j \in [s_i]} (u^i_{j+})$ and  $(u_{\mathcal{C}})_+^+ = \sum_{i\in [r]} \sum_{k \in [t_i]} (u^i_{+k})$.  Similarly, if $p^i_{jk}$ is a joint probability distribution indexed by the elements of $\mathcal{B} \times_{\mathcal{A}} \mathcal{C}$, the sum of the joint probability distribution over $\mathcal{A}$ (resp. $\mathcal{B}$, $\mathcal{C}$) is a marginal probability distribution and we denote it by $p_{\mathcal{A}} = (p^i_{++})_{i\in [r]}$ (resp. $p_{\mathcal{B}} = (p^i_{j+})_{i\in [r],j \in [s_i]}$, $p_{\mathcal{C}}=(p^i_{+k})_{i\in [r], k \in [t_i]}$).

\begin{theorem} \label{theorem:ML_degree_of_toric_fiber_products}
Let $\mathcal{A}$ consist of linearly independent vectors. Then the ML degree of $I_{\mathcal{B}} \times_{\mathcal{A}} I_{\mathcal{C}}$ is equal to the product of the ML degrees of $I_{\mathcal{B}}$ and $I_{\mathcal{C}}$. For a data vector $u$, every critical point of the likelihood function has the form
\begin{equation} \label{formula:critical_point_of_the_likelihood_function}
\overline{p}^i_{jk} := \frac{(\overline{p_{\mathcal{B}}})^i_{j} (\overline{p_{\mathcal{C}\,}})^i_{k}}{(\overline{p_{\mathcal{A}}})^i},
\end{equation}
where $\overline{p_{\mathcal{A}}}$,  $\overline{p_{\mathcal{B}}}$ and $\overline{p_{\mathcal{C}\,}}$ are  critical points of the likelihood function for the models $I_{\mathcal{A}}$, $I_{\mathcal{B}}$ and $I_{\mathcal{C}}$ and data vectors $u_{\mathcal{A}}$, $u_{\mathcal{B}}$ and $u_{\mathcal{C}}$, respectively. Since the elements of $\mathcal{A}$ are linearly independent, $\overline{p_{\mathcal{A}}}$ is in fact the normalized $u_{\mathcal{A}}$. Moreover, we obtain the maximum likelihood estimate of $I_{\mathcal{B}} \times_{\mathcal{A}} I_{\mathcal{C}}$ by taking $\overline{p_{\mathcal{A}}}$,  $\overline{p_{\mathcal{B}}}$ and $\overline{p_{\mathcal{C}\,}}$ to be the maximum likelihood estimates of the models $I_{\mathcal{A}}$, $I_{\mathcal{B}}$ and $I_{\mathcal{C}}$.
\end{theorem}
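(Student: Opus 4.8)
The plan is to combine Birch's Theorem (Proposition~\ref{Birch}) with the implicit description of the complex critical points of the likelihood function in \cite[Proposition 7]{Carlos_etal}, and to show that for a toric fiber product these equations decouple into the critical equations of the two factors. Throughout I work on the open torus where all coordinates are nonzero and $u$ is generic, so that each $\mathcal A$-marginal entry $(p_{\mathcal A})^i = p^i_{++}$ is nonzero. Let $B$ and $C$ denote the matrices whose columns are the configurations $\mathcal B$ and $\mathcal C$, and let $A_{\mathrm{prod}}$ be the matrix of $\mathcal B \times_{\mathcal A} \mathcal C$, whose column indexed by $(i,j,k)$ is $(b^i_j, c^i_k)$.

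First I would establish the structural factorization
\[
p^i_{jk} = \frac{p^i_{j+}\,p^i_{+k}}{p^i_{++}}
\]
as an identity at every torus point of $V_{\mathcal B \times_{\mathcal A} \mathcal C}$. Writing such a point as $p^i_{jk} = s\,\beta^{b^i_j}\gamma^{c^i_k}$ with $\beta \in (\CC^*)^{d_1}$, $\gamma \in (\CC^*)^{d_2}$ and setting $\sigma_i = \sum_k \gamma^{c^i_k}$, $\tau_i = \sum_j \beta^{b^i_j}$, one computes $p^i_{j+} = s\sigma_i\beta^{b^i_j}$, $p^i_{+k} = s\tau_i\gamma^{c^i_k}$ and $p^i_{++} = s\sigma_i\tau_i$, whence the identity is immediate. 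The essential point is that the $\mathcal B$-marginal $p_{\mathcal B}$ again lies on $V_{\mathcal B}$. Since $\pi_1(b^i_j) = a^i$ and the $a^i$ are linearly independent, the torus homomorphism $\mu \mapsto (\mu^{a^i})_i$ is surjective, so I can solve $\mu^{a^i} = \sigma_i$ for some $\mu \in (\CC^*)^d$; then $p^i_{j+} = s\,(\pi_1^{*}\mu \cdot \beta)^{b^i_j}$ exhibits $p_{\mathcal B}$ as a point of $V_{\mathcal B}$, and symmetrically $p_{\mathcal C} \in V_{\mathcal C}$. Conversely, given $q \in V_{\mathcal B}$ and $w \in V_{\mathcal C}$ with a common $\mathcal A$-marginal, the same absorption of the block-dependent prefactor into a monomial shows that $p^i_{jk} := q^i_j w^i_k / (p_{\mathcal A})^i$ lies on $V_{\mathcal B \times_{\mathcal A} \mathcal C}$. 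This yields a bijection, on torus points, between $V_{\mathcal B \times_{\mathcal A} \mathcal C}$ and the fiber product of $V_{\mathcal B}$ and $V_{\mathcal C}$ over their shared $\mathcal A$-marginal. I expect this to be the \emph{main obstacle}, as it is precisely here that linear independence of $\mathcal A$ is indispensable: without it the factors $\sigma_i$ need not be monomials $\mu^{a^i}$, and the marginal may fall off $V_{\mathcal B}$.

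Next I would decouple the likelihood equations by splitting the rows of $A_{\mathrm{prod}}$ into the $d_1$ rows from $\mathcal B$ and the $d_2$ rows from $\mathcal C$. For a $\mathcal B$-row $m$ a direct computation gives $(A_{\mathrm{prod}}\,p)_m = \sum_{i,j}(b^i_j)_m p^i_{j+} = (B\,p_{\mathcal B})_m$ and likewise $(A_{\mathrm{prod}}\,u)_m = (B\,u_{\mathcal B})_m$, while $\sum p = p^+_{++}$ and $u_+ = (u_{\mathcal B})^+_+$. Hence the $\mathcal B$-rows of the critical equations for $(V_{\mathcal B \times_{\mathcal A} \mathcal C}, u)$ are exactly the critical equations for $(V_{\mathcal B}, u_{\mathcal B})$ evaluated at $p_{\mathcal B}$, and symmetrically the $\mathcal C$-rows are those for $(V_{\mathcal C}, u_{\mathcal C})$ at $p_{\mathcal C}$. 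Combined with the previous step, $p$ is a critical point of the product model if and only if $p_{\mathcal B}$ and $p_{\mathcal C}$ are critical points of the two factor models for the marginal data $u_{\mathcal B}$ and $u_{\mathcal C}$.

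It then remains to verify admissibility of each pair and to count. Normalizing so that $\sum p_{\mathcal B} = 1$ and applying $\pi_1$ to $B\,p_{\mathcal B} = \frac{1}{u_+}B\,u_{\mathcal B}$ turns it into $A\,p_{\mathcal A} = \frac{1}{u_+}A\,u_{\mathcal A}$; since $A$ has linearly independent columns it is injective, forcing $p_{\mathcal A}$ to be the normalized $u_{\mathcal A}$. The identical conclusion for $V_{\mathcal C}$ shows that every critical point of either factor has the \emph{same} $\mathcal A$-marginal, namely normalized $u_{\mathcal A}$, so the compatibility demanded in the first step is automatic and the map $p \mapsto (p_{\mathcal B}, p_{\mathcal C})$ is a genuine bijection between critical points of the product and pairs of critical points of the factors. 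Counting gives $\mldeg(I_{\mathcal B} \times_{\mathcal A} I_{\mathcal C}) = \mldeg(I_{\mathcal B})\cdot \mldeg(I_{\mathcal C})$ together with formula~\eqref{formula:critical_point_of_the_likelihood_function}. Finally, restricting the bijection to positive real points and invoking the uniqueness of the positive critical point in Proposition~\ref{Birch} identifies the MLE of the product with the product formula applied to the MLEs of the factors, completing the argument.
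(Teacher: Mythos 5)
Your proposal is correct, and it establishes the key bijection between critical points of the toric fiber product and pairs of critical points of the factors by a genuinely different mechanism than the paper. The paper works implicitly with the ideal: it invokes Sullivant's generation theorem (Proposition~\ref{theorem:toric_fiber_product_generators}) to check that the product formula~\eqref{formula:critical_point_of_the_likelihood_function} kills every generator in $\text{Lift}(F)\cup\text{Lift}(G)\cup\text{Quad}$, it descends from the product to the factors by writing $f(\overline{p}_{\mathcal B})$ as an integer combination of the lifts $f_k(\overline{p})$, and it gets uniqueness (a critical point is determined by its marginals) from the rank-one structure forced by $\text{Quad}$. You instead work parametrically: a critical point has all coordinates nonzero, hence is an honest torus point $p^i_{jk}=s\,\beta^{b^i_j}\gamma^{c^i_k}$, and then the factorization identity $p^i_{jk}=p^i_{j+}p^i_{+k}/p^i_{++}$ together with the surjectivity of the torus homomorphism $\mu\mapsto(\mu^{a^i})_i$ replaces both the lifting argument and $\text{Quad}$; this is exactly where linear independence of $\mathcal A$ enters for you, whereas in the paper it enters as the hypothesis of Proposition~\ref{theorem:toric_fiber_product_generators} and in the marginal lemma. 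Your route is more self-contained --- it never needs a generating set of the toric fiber product --- and it makes the rank-one structure transparent; its price is that you must use the standard fact that the image of a homomorphism of algebraic tori is Zariski closed in the target torus, so that $V\cap(\CC^*)^n$ really is the image of the monomial parametrization and critical points really are torus points. The linear-equation decoupling in your second paragraph coincides with the paper's Lemma~\ref{lemma:multiplication_with_the_fiber_product_matrix}, and the final identification of the MLE via positivity and the uniqueness in Proposition~\ref{Birch} is the same in both arguments.

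One repair is needed in the logical order. You assume up front that each block sum $p^i_{++}$ is nonzero ``for generic $u$,'' but genericity of $u$ by itself does not prevent a point of the variety with all coordinates nonzero from having a vanishing block sum; what rules this out is the critical equations. Note that your decoupling step is pure linear algebra and does not use the factorization, and your third-paragraph computation then gives $A\,p_{\mathcal A}=\frac{1}{u_+}A\,u_{\mathcal A}$, hence $p_{\mathcal A}=u_{\mathcal A}/u_+$ by injectivity of $A$, which has nonzero entries for positive (or generic) data. So run those two steps first, for an arbitrary critical point of the product model, and only then invoke the torus factorization and the absorption argument of your first paragraph. With that reordering, and the closed-image fact cited, your argument is complete.
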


Theorem~\ref{theorem:ML_degree_of_toric_fiber_products} generalizes known results about reducible hierarchical~\cite[Proposition 4.14]{lauritzen1996graphical} and discrete graphical models. In particular, one recovers the rational formula for the MLE in the case of decomposable graphical models (indeed, they have ML degree one)~\cite[ Proposition 4.18]{lauritzen1996graphical} and general discrete graphical models~\cite[Theorem 1]{frydenberg1989decomposition}. The latter result is for mixed graphical interaction models that allow both discrete and continuous random variables. Theorem~\ref{theorem:ML_degree_of_toric_fiber_products} generalizes the case when all variables are discrete.

To prove Theorem~\ref{theorem:ML_degree_of_toric_fiber_products}, we first have to recall how to obtain a generating set for $I_{\mathcal{B}} \times_{\mathcal{A}} I_{\mathcal{C}}$ from the generating sets for $I_{\mathcal{B}}$ and $I_{\mathcal{C}}$. Let
$$
f = x^{i_1}_{j^1_1} x^{i_2}_{j^1_2} \cdots x^{i_d}_{j^1_d} - x^{i_1}_{j^2_1} x^{i_2}_{j^2_2} \cdots x^{i_d}_{j^2_d} \in \R[x^i_j].
$$
For $k=(k_1,k_2,\ldots,k_d) \in [t_{i_1}] \times [t_{i_2}] \times \cdots \times [t_{i_d}]$ define
$$
f_k = z^{i_1}_{j^1_1 k_1} z^{i_2}_{j^1_2 k_2} \cdots z^{i_d}_{j^1_d k_d} - z^{i_1}_{j^2_1 k_1} z^{i_2}_{j^2_2 k_2} \cdots z^{i_d}_{j^2_d k_d} \in \R[z^i_{jk}].
$$
Let $T_f = \prod_{l=1}^d [t_{i_l}]$.  For a set of binomials $F$, we define
$$
\text{Lift}(F) = \{f_k: f \in F, k \in T_f\}.
$$
We also define
$$
\text{Quad}=\{z^i_{j_1 k_1} z^i_{j_2 k_2} - z^i_{j_1 k_2} z^i_{j_2 k_1}: i \in [r], j_1,j_2 \in [s_i], k_1,k_2 \in [t_i]\}.
$$

\begin{proposition}[\cite{sullivant2007toric}, Corollary 14] \label{theorem:toric_fiber_product_generators}
Let $\mathcal{A}$ consist of linearly independent vectors. Let $F$ be a generating set of $I_{\mathcal{B}}$ and let $G$ be a generating set of $I_{\mathcal{C}}$. Then $I_{\mathcal{B}} \times_{\mathcal{A}} I_{\mathcal{C}}$ is generated by
$$
\text{Lift}(F) \cup \text{Lift}(G) \cup \text{Quad}.
$$
\end{proposition}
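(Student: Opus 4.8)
The plan is to realize $I_{\mathcal{B}} \times_{\mathcal{A}} I_{\mathcal{C}} = I_{\mathcal{B} \times_{\mathcal{A}} \mathcal{C}}$ as the kernel of the monomial map $\varphi$ sending $z^i_{jk} \mapsto s^{b^i_j} t^{c^i_k}$, so that it is spanned by binomials $z^\lambda - z^\mu$ with $\sum_{i,j,k} \lambda^i_{jk} b^i_j = \sum_{i,j,k} \mu^i_{jk} b^i_j$ and $\sum_{i,j,k} \lambda^i_{jk} c^i_k = \sum_{i,j,k} \mu^i_{jk} c^i_k$. Writing $K = \langle \text{Lift}(F) \cup \text{Lift}(G) \cup \text{Quad} \rangle$, the inclusion $K \subseteq I_{\mathcal{B}} \times_{\mathcal{A}} I_{\mathcal{C}}$ is immediate: each element of $\text{Quad}$ is killed by $\varphi$ since its two monomials carry the same row and column indices inside a block, and each $f_k \in \text{Lift}(F)$ is killed because $f \in I_{\mathcal{B}}$ balances the $s$-exponents while the common second indices $k_1, \ldots, k_d$ balance the $t$-exponents (symmetrically for $\text{Lift}(G)$). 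Everything then rests on the reverse inclusion.

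I would first record two consequences of the linear independence of $\mathcal{A}$. For a binomial $z^\lambda - z^\mu$ in the ideal, put $\nu = \lambda - \mu$; applying $\pi_1$ to $\sum \nu^i_{jk} b^i_j = 0$ and using $\pi_1(b^i_j) = a^i$ gives $\sum_i \bigl( \sum_{jk} \nu^i_{jk} \bigr) a^i = 0$, so independence forces $\sum_{jk} \nu^i_{jk} = 0$ in every block $i$ separately. Introducing the block row sums $r^i_j(\lambda) = \sum_k \lambda^i_{jk}$ and column sums $c^i_k(\lambda) = \sum_j \lambda^i_{jk}$, the $s$-balance condition becomes $\sum_{i,j} r^i_j(\lambda) b^i_j = \sum_{i,j} r^i_j(\mu) b^i_j$, i.e. $x^{r(\lambda)} - x^{r(\mu)} \in I_{\mathcal{B}}$, and likewise $y^{c(\lambda)} - y^{c(\mu)} \in I_{\mathcal{C}}$. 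Finally, $\text{Quad}$ is exactly the ideal of $2 \times 2$ minors of the matrix $(z^i_{jk})_{j,k}$ in each block, and since this is the (prime, toric) ideal of a Segre map, two block monomials are congruent modulo $\text{Quad}$ precisely when they share the same row-sum and column-sum vectors.

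The key bookkeeping is that the three families of moves act predictably on these sums: a $\text{Lift}(F)$ move keeps the common column indices $k_l$ fixed, hence preserves all column sums while implementing on the row sums exactly the $F$-move $f \in I_{\mathcal{B}}$; symmetrically $\text{Lift}(G)$ preserves row sums and enacts a $G$-move on the column sums; and $\text{Quad}$ preserves both. I would therefore prove the reverse inclusion by connecting $z^\lambda$ to $z^\mu$ in two stages. Since $F$ is a generating set, equivalently a Markov basis, $r(\lambda)$ and $r(\mu)$ lie in the same fiber of the $\mathcal{B}$-map and are joined by a sequence of $F$-moves; I would realize each as a $\text{Lift}(F)$ move, using $\text{Quad}$ to rearrange the current monomial so that the required block variables $z^{i_l}_{j^1_l k_l}$ are present, thereby transporting the row sums from $r(\lambda)$ to $r(\mu)$ while fixing the column sums at $c(\lambda)$. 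Next I would use $G$ and $\text{Lift}(G)$ to move the column sums from $c(\lambda)$ to $c(\mu)$ with the row sums frozen at $r(\mu)$. After both stages the monomial has row sums $r(\mu)$ and column sums $c(\mu)$, so by the $\text{Quad}$ normal form it is congruent to $z^\mu$, giving $z^\lambda - z^\mu \in K$.

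The step I expect to be the genuine obstacle is guaranteeing the applicability of each lift: at every stage one must know that $\text{Quad}$ can realign the current monomial so that the chosen $\text{Lift}$ generator actually divides it, and that compatible second (resp. first) indices are always available to form a legitimate generator. This is where the block-wise balance $\sum_{jk} \nu^i_{jk} = 0$ forced by linear independence is essential, since it prevents the row- and column-relabelling from leaking across blocks and keeps every intermediate monomial inside the configuration. Cleanly, I would organize this either as the explicit normal-form induction sketched above, tracking a monotone quantity such as the total $F$- and $G$-distance to $z^\mu$ to ensure termination, or as a Gröbner basis verification: pick a term order refining the block structure and check by Buchberger's criterion that the surviving S-pairs among $\text{Lift}(F) \cup \text{Lift}(G) \cup \text{Quad}$ reduce to zero, with linear independence ensuring there are no nontrivial cross-block overlaps. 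I would present the move-based induction, as it makes the statistical meaning of the generators most transparent.
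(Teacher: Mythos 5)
Your proposal is essentially correct, but there is nothing in the paper to compare it against: the statement is quoted from Sullivant's toric fiber product paper (Corollary 14 there) and is not proved in this article at all. The comparison that makes sense is with Sullivant's original argument, which is Gr\"obner-theoretic: he first shows that if $F$ and $G$ are Gr\"obner bases with respect to suitable weight orders, then $\text{Lift}(F)\cup\text{Lift}(G)\cup\text{Quad}$ is a Gr\"obner basis of $I_{\mathcal{B}}\times_{\mathcal{A}}I_{\mathcal{C}}$ for an associated weight order, and then deduces the generating-set statement. Your route is instead the Markov-basis (fiber connectivity) version of the same fact: block-wise balance from linear independence of $\mathcal{A}$, bookkeeping of row and column sums, transport of row sums by lifted $F$-moves and of column sums by lifted $G$-moves, and a final identification modulo $\text{Quad}$. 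This is a legitimate, self-contained alternative; it makes the statistical content transparent, whereas the Gr\"obner version yields the stronger information about initial ideals that Sullivant needs elsewhere. One substantive remark: the step you flag as the genuine obstacle is easier than you fear, and no $\text{Quad}$ realignment is needed to apply a lift. If the current exponent matrix $\lambda$ has block-$i$ row sums $r^i_j$ dominating the positive part $u^+$ of the $F$-move to be applied, choose column indices greedily row by row: in block $i$, row $j$, pick multiplicities $\sigma^i_{jk}\le\lambda^i_{jk}$ with $\sum_k\sigma^i_{jk}=(u^+)^i_j$, which exist precisely because $\sum_k\lambda^i_{jk}=r^i_j\ge (u^+)^i_j$; the positive monomial of the corresponding lift then divides $z^\lambda$ outright, and the move replaces it by a monomial with row sums updated by the $F$-move and column sums unchanged. $\text{Quad}$ enters only at the very end, to identify the resulting monomial with $z^\mu$ once all row and column sums agree; for that multi-block identification you should record that the kernel of $z^i_{jk}\mapsto x^i_j y^i_k$ (disjoint variable blocks) is the sum of the single-block Segre ideals, a standard splitting argument for monomial maps on disjoint alphabets. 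With these two details inserted your induction terminates automatically, since it follows a fixed finite sequence of $F$-moves and then $G$-moves, and no extra monotone quantity or Buchberger verification is required.
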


\begin{example}
The 3-valent four leaf tree $\mathcal{T}_4$ is the union of two tripods $\mathcal{T}_3$ that share an interior edge $e$. By Proposition~\ref{theorem:toric_fiber_product_generators}, a generating set for $I_{{\mathcal{T}}_4}$ is given by the lifts of generating sets for $I_{{\mathcal{T}}_3}$ and by quadrics with respect to the edge $e$. Since  $I_{{\mathcal{T}}_3}=\langle 0 \rangle$, its lift is $\{0\}$. The set $\text{Quad}$ consists of $x_{00000}x_{11011}-x_{11000}x_{00011}$ and $x_{10110}x_{01101}-x_{10101}x_{01110}$ that are generators of $I_{{\mathcal{T}}_4}$.
\end{example}

\begin{example}
The 3-valent five leaf tree $\mathcal{T}_5$ is the union of the 3-valent four leaf tree $\mathcal{T}_4$ and tripod $\mathcal{T}_3$ that share an interior edge $e$. The fifth index of a variable $x_{e_1 e_2 e_3 e_4 e_5}$ in the coordinate ring of $\mathcal{T}_4$ and the first index of a variable $x_{e_5 e_6 e_7}$ in the coordinate ring of $\mathcal{T}_3$ correspond to the edge $e$. Recall that a generating set of $I_{{\mathcal{T}}_4}$ is $F=\{x_{00000}x_{11011}-x_{11000}x_{00011},x_{10110}x_{01101}-x_{01110}x_{10101}\}$ and a generating set of $I_{{\mathcal{T}}_3}$ is $G=\{0\}$. Both elements of $F$ have four lifts corresponding to $k=(000,110),k=(000,101),k=(011,110)$ and $k=(011,101)$. Hence $\text{Lift}(F)$ consists of
\begin{align*}
x_{0000000}x_{1101110}-x_{1100000}x_{0001110}, x_{0000000}x_{1101101}-x_{1100000}x_{0001101},\\
x_{0000011}x_{1101110}-x_{1100011}x_{0001110}, x_{0000011}x_{1101101}-x_{1100011}x_{0001101},\\
x_{1011000}x_{0110110}-x_{0111000}x_{1010110},x_{1011000}x_{0110101}-x_{0111000}x_{1010101},\\
x_{1011011}x_{0110110}-x_{0111011}x_{1010110},x_{1011011}x_{0110101}-x_{0111011}x_{1010101},
\end{align*}
and $\text{Lift}(G)=\{0\}$. The set $\text{Quad}$ consists of 12 polynomials.
\end{example}

To prove Theorem~\ref{theorem:ML_degree_of_toric_fiber_products}, we also need the following lemmas.

\begin{lemma} \label{lemma:multiplication_with_the_fiber_product_matrix}
For any $u \in \mathbb{R}^{\mathcal{B} \times_{\mathcal{A}} \mathcal{C}}$, we have $(\mathcal{B} \times_{\mathcal{A}} \mathcal{C}) u = (\mathcal{B} u_{\mathcal{B}},\mathcal{C} u_{\mathcal{C}})$.
\end{lemma}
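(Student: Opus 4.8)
The plan is to unwind the definition of the matrix--vector product and to split it according to the block structure of the columns of $\mathcal{B} \times_{\mathcal{A}} \mathcal{C}$. Viewed as a matrix, $\mathcal{B} \times_{\mathcal{A}} \mathcal{C}$ has its columns indexed by the triples $(i,j,k)$ with $i \in [r]$, $j \in [s_i]$, $k \in [t_i]$, and the $(i,j,k)$-th column is the concatenation $(b^i_j, c^i_k) \in \Z^{d_1+d_2}$. Thus for $u = (u^i_{jk})$ I would write
\[
(\mathcal{B} \times_{\mathcal{A}} \mathcal{C}) u = \sum_{i \in [r]} \sum_{j \in [s_i]} \sum_{k \in [t_i]} u^i_{jk} \, (b^i_j, c^i_k)
\]
and then treat the first $d_1$ coordinates and the last $d_2$ coordinates separately.

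First I would observe that in the first block the summand is $u^i_{jk} b^i_j$, which involves $k$ only through the scalar $u^i_{jk}$; interchanging the order of summation and collapsing over $k$ produces the $\mathcal{B}$-marginal $u^i_{j+} = \sum_k u^i_{jk}$, so that this block equals $\sum_{i,j} u^i_{j+} b^i_j = \mathcal{B} u_{\mathcal{B}}$. Symmetrically, in the second block the summand $u^i_{jk} c^i_k$ involves $j$ only through $u^i_{jk}$, and collapsing over $j$ produces the $\mathcal{C}$-marginal $u^i_{+k}$, giving $\sum_{i,k} u^i_{+k} c^i_k = \mathcal{C} u_{\mathcal{C}}$. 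Concatenating the two blocks then yields $(\mathcal{B} \times_{\mathcal{A}} \mathcal{C}) u = (\mathcal{B} u_{\mathcal{B}}, \mathcal{C} u_{\mathcal{C}})$.

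Since the identity is purely combinatorial bookkeeping, there is no substantive obstacle; the only point demanding care is tracking which index is marginalized in each block. The first $d_1$ coordinates see only $\mathcal{B}$, so the $\mathcal{C}$-index $k$ is summed out to give $u_{\mathcal{B}}$, whereas the last $d_2$ coordinates see only $\mathcal{C}$, so the $\mathcal{B}$-index $j$ is summed out to give $u_{\mathcal{C}}$. This asymmetric collapsing is the entire content of the statement, and it follows immediately once the triple sum above is written out and the order of summation is interchanged.
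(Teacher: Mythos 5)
Your proof is correct and follows essentially the same argument as the paper: both unwind the matrix--vector product and collapse the sum over the marginalized index ($k$ for the $\mathcal{B}$-block, $j$ for the $\mathcal{C}$-block) to recover the marginal vectors $u_{\mathcal{B}}$ and $u_{\mathcal{C}}$. The only cosmetic difference is that you expand over columns and split into coordinate blocks, whereas the paper argues row by row, which is the same computation presented dually.
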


\begin{proof}
Assume that the $r$-th row comes from the $\mathcal{B}$ part of the matrix $\mathcal{B} \times_{\mathcal{A}} \mathcal{C}$. Then the $r$-th row of $\mathcal{B} \times_{\mathcal{A}} \mathcal{C}$ multiplied with $u$ gives
$$
\sum_{i \in [r], j \in [s_i], k \in [t_i]} (b^i_j,c^i_k)_r u^i_{jk} = \sum_{i \in [r], j \in [s_i], k \in [t_i]} (b^i_j)_r u^i_{jk} = \sum_{i \in [r], j \in [s_i]} (b^i_j)_r u^i_{j+}.
$$
This is the $r$-th row of $\mathcal{B}$ multiplied with $u_{\mathcal{B}}$.
\end{proof}

\begin{lemma}
\label{lemma:marginaL_{c,i}s_MLE}
The following equations hold:
$$
\overline{p}_{\mathcal{B}} = \overline{p_{\mathcal{B}}} \quad \text{and} \quad \overline{p}_{\mathcal{C}} = \overline{p_{\mathcal{C}\,}}.
$$
In particular, the entries of $\overline{p}$ sum to one i.e. $\overline{p}_{++}^{+}= \sum_{i\in [r]} \sum_{j \in [s_i]}  \sum_{k \in [t_i]} \overline{p}_{jk}^{i} = 1$ .
\end{lemma}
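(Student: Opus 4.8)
The plan is to treat $\overline{p}$ as the candidate point defined coordinatewise by formula~\eqref{formula:critical_point_of_the_likelihood_function} and to compute its $\mathcal{B}$- and $\mathcal{C}$-marginals directly. First I would marginalize over the $\mathcal{C}$-index $k$: since the factors $(\overline{p_{\mathcal{B}}})^i_{j}$ and $(\overline{p_{\mathcal{A}}})^i$ are independent of $k$, summing over $k \in [t_i]$ gives
\[
(\overline{p})^i_{j+} = \sum_{k \in [t_i]} \frac{(\overline{p_{\mathcal{B}}})^i_{j}(\overline{p_{\mathcal{C}\,}})^i_{k}}{(\overline{p_{\mathcal{A}}})^i} = \frac{(\overline{p_{\mathcal{B}}})^i_{j}}{(\overline{p_{\mathcal{A}}})^i}\,(\overline{p_{\mathcal{C}\,}})^i_{+},
\]
and symmetrically $(\overline{p})^i_{+k} = \frac{(\overline{p_{\mathcal{C}\,}})^i_{k}}{(\overline{p_{\mathcal{A}}})^i}(\overline{p_{\mathcal{B}}})^i_{+}$. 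Thus everything reduces to identifying the $\mathcal{A}$-marginals $(\overline{p_{\mathcal{B}}})^i_{+}$ and $(\overline{p_{\mathcal{C}\,}})^i_{+}$ with $(\overline{p_{\mathcal{A}}})^i$.

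The key step, and the only place real content enters, is showing $(\overline{p_{\mathcal{C}\,}})^i_{+} = (\overline{p_{\mathcal{A}}})^i$ (and likewise for $\mathcal{B}$). I would obtain this from Birch's Theorem (Proposition~\ref{Birch}) applied to the model $I_{\mathcal{C}}$ with data $u_{\mathcal{C}}$, which gives $\mathcal{C}\,\overline{p_{\mathcal{C}\,}} = \frac{1}{(u_{\mathcal{C}})_+^+}\mathcal{C}\,u_{\mathcal{C}}$, together with the multigrading map $\pi_2$ satisfying $\pi_2(c^i_k) = a^i$. Applying the linear map $\pi_2$ to both sides and collecting the $k$-sums turns this into $\sum_i a^i (\overline{p_{\mathcal{C}\,}})^i_{+} = \frac{1}{u^+_{++}}\sum_i a^i u^i_{++}$, where I have used $(u_{\mathcal{C}})_+^+ = u^+_{++}$. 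Since the elements of $\mathcal{A}$ are linearly independent by hypothesis, matching coefficients yields $(\overline{p_{\mathcal{C}\,}})^i_{+} = u^i_{++}/u^+_{++}$, which is exactly the normalized $u_{\mathcal{A}}$, i.e.\ $(\overline{p_{\mathcal{A}}})^i$ as recorded in Theorem~\ref{theorem:ML_degree_of_toric_fiber_products}. The same argument with $\pi_1$ gives $(\overline{p_{\mathcal{B}}})^i_{+} = (\overline{p_{\mathcal{A}}})^i$.

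Substituting these two identities back into the marginal computations immediately gives $(\overline{p})^i_{j+} = (\overline{p_{\mathcal{B}}})^i_{j}$ and $(\overline{p})^i_{+k} = (\overline{p_{\mathcal{C}\,}})^i_{k}$, which are precisely $\overline{p}_{\mathcal{B}} = \overline{p_{\mathcal{B}}}$ and $\overline{p}_{\mathcal{C}} = \overline{p_{\mathcal{C}\,}}$. The final normalization claim then follows by summing once more: $\overline{p}^+_{++} = \sum_{i,j}(\overline{p})^i_{j+} = \sum_{i,j}(\overline{p_{\mathcal{B}}})^i_{j} = 1$, since $\overline{p_{\mathcal{B}}}$ is an MLE and hence a probability vector. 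I expect no serious obstacle; the one point requiring care is the linear-algebra step, namely that applying $\pi_2$ to the Birch identity is legitimate and that the linear independence of $\mathcal{A}$ genuinely forces the coefficientwise equality — this is exactly where the hypothesis that $\mathcal{A}$ consists of linearly independent vectors is essential.
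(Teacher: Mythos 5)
Your proposal is correct and follows essentially the same route as the paper: marginalize formula~\eqref{formula:critical_point_of_the_likelihood_function} over $k$, reduce to showing $(\overline{p_{\mathcal{C}\,}})^i_{+}=(\overline{p_{\mathcal{A}}})^i$, and obtain that identity by applying $\pi_2$ to the Birch identity $\mathcal{C}\,\overline{p_{\mathcal{C}\,}}=\mathcal{C}\,u_{\mathcal{C}}/(u_{\mathcal{C}})^+_+$ and invoking the linear independence of the $a^i$. The only cosmetic difference is that the paper's displayed sum carries an extraneous factor $t_i$ on both sides (harmless), and you should say ``critical point'' rather than ``MLE'' for $\overline{p_{\mathcal{B}}}$ in the final normalization step, since the lemma is applied to arbitrary critical points, which sum to one by the Birch-type normalization anyway.
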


\begin{proof}
By the definition of $\overline{p}$, we have
$$
(\overline{p}_{\mathcal{B}})^i_{j} =  \sum_{k \in [t_i]}\frac{(\overline{p_{\mathcal{B}}})^i_{j} (\overline{p_{\mathcal{C}\,}})^i_{k}}{(\overline{p_{\mathcal{A}}})^i} = \frac{(\overline{p_{\mathcal{B}}})^i_{j} (\overline{p_{\mathcal{C}\,}})^i_{+}}{(\overline{p_{\mathcal{A}}})^i} \text{ .}
$$
Hence we need to show that $(\overline{p_{\mathcal{C}\,}})^i_{+}=(\overline{p_{\mathcal{A}}})^i$. By Birch's theorem for $I_{\mathcal{C}}$, we have $\mathcal{C} \overline{p_{\mathcal{C}\,}}=\mathcal{C} \frac{u_{\mathcal{C}}}{{(u_{\mathcal{C}})^+_+}}$ and hence $\pi_2(\mathcal{C} )\overline{p_{\mathcal{C}\,}}=\pi_2(\mathcal{C})\frac{u_{\mathcal{C}}}{{(u_{\mathcal{C}})^+_+}}$ where $\pi_2$ is applied to $C$ columnwise. The second equation is equivalent to $\sum_{i \in [r]} t_i a^i (\overline{p_{\mathcal{C}\,}})^i_+ = \sum_{i \in [r]} t_i a^i  \frac{(u_{\mathcal{C}})^i_{+}}{{(u_{\mathcal{C}})^+_+}}$. Since $a^i$ are linearly independent, this implies $(\overline{p_{\mathcal{C}\,}})^i_+ =\frac{(u_{\mathcal{C}})^i_{+}}{{(u_{\mathcal{C}})^+_+}}=\frac{u^i_{++}}{{u^+_{++}}}=(\overline{p_{\mathcal{A}}})^i$ for all $i \in [r]$.
\end{proof}

\begin{proof}[Proof of Theorem~\ref{theorem:ML_degree_of_toric_fiber_products}]
We start by showing that every vector of the form (\ref{formula:critical_point_of_the_likelihood_function}) satisfies the conditions of Birch's theorem, i.e. $\overline{p}^+_{++}=1$, $(\mathcal{B} \times_{\mathcal{A}} \mathcal{C}) \overline{p} = (\mathcal{B} \times_{\mathcal{A}} \mathcal{C}) \frac{u}{u^+_{++}}$ and $\overline{p} \in V(I_{\mathcal{B}} \times_{\mathcal{A}} I_{\mathcal{C}})$, and hence is a critical point of the likelihood function of $I_{\mathcal{B}} \times_{\mathcal{A}} I_{\mathcal{C}}$. The entries of $\overline{p}$ sum to one by Lemma~\ref{lemma:marginaL_{c,i}s_MLE}. Secondly, we have
$$
(\mathcal{B} \times_{\mathcal{A}} \mathcal{C}) \overline{p} = (\mathcal{B} \overline{p}_{\mathcal{B}},\mathcal{C} \overline{p}_{\mathcal{C}}) = (\mathcal{B} \overline{p_{\mathcal{B}}},\mathcal{C} \overline{p_{\mathcal{C}\,}}) = (\mathcal{B} \frac{u_{\mathcal{B}}}{(u_{\mathcal{B}})^+_+},\mathcal{C} \frac{u_{\mathcal{C}}}{(u_{\mathcal{C}})^+_+}) = (\mathcal{B} \frac{u_{\mathcal{B}}}{u^+_{++}},\mathcal{C} \frac{u_{\mathcal{C}}}{u^+_{++}}) = (\mathcal{B} \times_{\mathcal{A}} \mathcal{C}) \frac{u}{u^+_{++}}.
$$
The first and last equalities hold by Lemma~\ref{lemma:multiplication_with_the_fiber_product_matrix}. The second equality holds by Lemma~\ref{lemma:marginaL_{c,i}s_MLE} while the third equality follows from Birch's theorem for $I_{\mathcal{B}}$ and $I_{\mathcal{C}}$.
Thirdly, we have to show $\overline{u} \in  I_{\mathcal{B}} \times_{\mathcal{A}} I_{\mathcal{C}}$. For $f_k \in \text{Lift}(F)$, we have
\begin{align*}
f_k(\overline{p}) &= \frac{(\overline{p_{\mathcal{B}}})^{i_1}_{j^1_1} (\overline{p_{\mathcal{C}\,}})^{i_1}_{k_1}}{(\overline{p_{\mathcal{A}}})^{i_1}} \frac{(\overline{p_{\mathcal{B}}})^{i_2}_{j^1_2} (\overline{p_{\mathcal{C}\,}})^{i_2}_{k_2}}{(\overline{p_{\mathcal{A}}})^{i_2}} \cdots \frac{(\overline{p_{\mathcal{B}}})^{i_d}_{j^1_d} (\overline{p_{\mathcal{C}\,}})^{i_d}_{k_d}}{(\overline{p_{\mathcal{A}}})^{i_d}} - \frac{(\overline{p_{\mathcal{B}}})^{i_1}_{j^2_1} (\overline{p_{\mathcal{C}\,}})^{i_1}_{k_1}}{(\overline{p_{\mathcal{A}}})^{i_1}} \frac{(\overline{p_{\mathcal{B}}})^{i_2}_{j^2_2} (\overline{p_{\mathcal{C}\,}})^{i_2}_{k_2}}{(\overline{p_{\mathcal{A}}})^{i_2}} \cdots \frac{(\overline{p_{\mathcal{B}}})^{i_d}_{j^2_d} (\overline{p_{\mathcal{C}\,}})^{i_d}_{k_d}}{(\overline{p_{\mathcal{A}}})^{i_d}}\\
&= \left((\overline{p_{\mathcal{B}}})^{i_1}_{j^1_1}  (\overline{p_{\mathcal{B}}})^{i_2}_{j^1_2}  \cdots (\overline{p_{\mathcal{B}}})^{i_d}_{j^1_d}  - (\overline{p_{\mathcal{B}}})^{i_1}_{j^2_1} (\overline{p_{\mathcal{B}}})^{i_2}_{j^2_2}  \cdots (\overline{p_{\mathcal{B}}})^{i_d}_{j^2_d} \right) \frac{ (\overline{p_{\mathcal{C}\,}})^{i_1}_{k_1}}{(\overline{p_{\mathcal{A}}})^{i_1}} \frac{(\overline{p_{\mathcal{C}\,}})^{i_2}_{k_2}}{(\overline{p_{\mathcal{A}}})^{i_2}} \cdots \frac{(\overline{p_{\mathcal{C}\,}})^{i_d}_{k_d}}{(\overline{p_{\mathcal{A}}})^{i_d}}\\
&=f(\overline{p_{\mathcal{B}}}) \cdot \frac{ (\overline{p_{\mathcal{C}\,}})^{i_1}_{k_1}}{(\overline{p_{\mathcal{A}}})^{i_1}} \frac{(\overline{p_{\mathcal{C}\,}})^{i_2}_{k_2}}{(\overline{p_{\mathcal{A}}})^{i_2}} \cdots \frac{(\overline{p_{\mathcal{C}\,}})^{i_d}_{k_d}}{(\overline{p_{\mathcal{A}}})^{i_d}} = 0.
\end{align*}
An element of $\text{Quad}$ gives
\begin{align*}
(z^i_{j_1 k_1} z^i_{j_2 k_2} - z^i_{j_1 k_2} z^i_{j_2 k_1})(\overline{p}) =
\frac{(\overline{p_{\mathcal{B}}})^i_{j_1} (\overline{p_{\mathcal{C}\,}})^i_{k_1}}{(\overline{p_{\mathcal{A}}})^{i}} \frac{(\overline{p_{\mathcal{B}}})^i_{j_2} (\overline{p_{\mathcal{C}\,}})^i_{k_2}}{(\overline{p_{\mathcal{A}}})^{i}} - \frac{(\overline{p_{\mathcal{B}}})^i_{j_1} (\overline{p_{\mathcal{C}\,}})^i_{k_2}}{(\overline{p_{\mathcal{A}}})^{i}} \frac{(\overline{p_{\mathcal{B}}})^i_{j_2} (\overline{p_{\mathcal{C}\,}})^i_{k_1}}{(\overline{p_{\mathcal{A}}})^{i}} =0.
\end{align*}
Hence $\overline{p}$ is a critical point of the likelihood function of $I_{\mathcal{B}} \times_{\mathcal{A}} I_{\mathcal{C}}$.

Conversely, let $\overline{p}$ be any critical point of the likelihood function of $I_{\mathcal{B}} \times_{\mathcal{A}} I_{\mathcal{C}}$. Then the  entries of $\overline{p}_{\mathcal{B}}$ and $\overline{p}_{\mathcal{C}}$ sum to one. Also
$$
(\mathcal{B} \frac{u_{\mathcal{B}}}{(u_{\mathcal{B}})^+_+},\mathcal{C} \frac{u_{\mathcal{C}}}{(u_{\mathcal{C}})^+_+}) = (\mathcal{B} \frac{u_{\mathcal{B}}}{u^+_{++}},\mathcal{C} \frac{u_{\mathcal{C}}}{u^+_{++}}) = (\mathcal{B} \times_{\mathcal{A}} \mathcal{C}) \frac{u}{u^+_{++}} = (\mathcal{B} \times_{\mathcal{A}} \mathcal{C}) \overline{p} =(\mathcal{B} \overline{p}_{\mathcal{B}},\mathcal{C} \overline{p}_{\mathcal{C}}).
$$
For every $f$ in a generating set for $I_{\mathcal{B}}$
$$
f(\overline{p}_{\mathcal{B}})= \sum_{k \in T_f} c_k f_k  (\overline{p}) =0,
$$
where $c_k$ are integer coefficients. By Birch's theorem, $\overline{p}_B$ is a critical point of the likelihood function of $I_{\mathcal{B}}$ and similarly $\overline{p}_C$ is a critical point of the likelihood function of $I_{\mathcal{C}}$. It is left to show that $\overline{p}$ is the only element in $ I_{\mathcal{B}} \times_{\mathcal{A}} I_{\mathcal{C}}$ with marginals $\overline{p}_{\mathcal{B}}$ and $\overline{p}_{\mathcal{C}}$. Indeed, for fixed $i \in [r]$, the matrix of $\overline{p}^i_{jk}$ for $j \in [s_i],k \in [t_i]$ has rank 1, because $\text{Quad}$ contains all $2 \times 2$-minors for this matrix. Hence the marginals $\overline{p}^i_{j+}$ and $\overline{p}^i_{+k}$ completely determine this matrix.

Finally, we get the maximum likelihood estimate of $I_{\mathcal{B}} \times_{\mathcal{A}} I_{\mathcal{C}}$ by taking  $\overline{p_{\mathcal{B}}}$ and $\overline{p_{\mathcal{C}\,}}$ to be the maximum likelihood estimates of the models $I_{\mathcal{B}}$ and $I_{\mathcal{C}}$.  By Lemma~\ref{lemma:marginaL_{c,i}s_MLE}, the margins $\hat{p}_{\mathcal{B}}$ and $\hat{p}_{\mathcal{C}}$ of the maximum likelihood estimate of the model $I_{\mathcal{B}} \times_{\mathcal{A}} I_{\mathcal{C}}$ are equal to $\overline{p_{\mathcal{B}}}$ and $\overline{p_{\mathcal{C}\,}}$, in particular $\overline{p_{\mathcal{B}}}$ and $\overline{p_{\mathcal{C}\,}}$ are nonnegative. By Proposition~\ref{Birch}, for each toric model there is a unique nonnegative critical point of the likelihood function and it is the maximum likelihood estimate. Hence  $\overline{p_{\mathcal{B}}}$ and $\overline{p_{\mathcal{C}\,}}$ have to be the maximum likelihood estimates for the models $I_{\mathcal{B}}$ and $I_{\mathcal{C}}$.
\end{proof}

Let $\mathcal{T}$ be an $n$-leaf 3-valent tree. We denote the coordinates of a vector $u \in \R^{2^{n-1}}$ by $u_l$ where $l$ corresponds to a labeling of the edges of $\mathcal{T}$. Let $\mathcal{T}'$ be a subtree of $\mathcal{T}$ and  denote the restriction of the labeling $l$ to $\mathcal{T}'$ by $l|_{T'}$. We denote by $u_{T'}$ the marginal sum of $u$ with the edges of $\mathcal{T}$ not in $\mathcal{T}'$ marginalized out, i.e. the vector $u_{T'}$ is indexed by the labelings of $\mathcal{T}'$ and if $l'$ is a labeling of $\mathcal{T}'$ then $(u_{T'})_{l'} = \sum_{l|_{T'}=l'} u_l$. If the subtree is an edge $e$, then the marginal sum is defined in the same way and denoted by $u_e$.  As before, we denote the sum of entries of $u$ by $u_+$.

\begin{corollary} \label{corollary:ML_degree_phylogenetic_models}
For any 3-valent tree $\mathcal{T}$, the ML degree of $V_{\mathcal{T}}$ is one. If $\mathcal{T}$ is tripod and $u$ is a data vector, then the maximum likelihood estimate is
$$
\hat{p}=\frac{u}{u_{+}}.
$$
It $\mathcal{T}$ has more than three leaves, let $\mathcal{T}_1,{\mathcal{T}}_2,\ldots,{\mathcal{T}}_{n-2}$ be the tripods contained in $\mathcal{T}$ and let $e_1,e_2,\ldots,e_{n-3}$ be the inner edges of $\mathcal{T}$. For data vector $u$, the maximum likelihood estimate is
\begin{equation}\label{equation:MLE_phylogenetic_models}
\hat{p}_l = \frac{\prod_{j=1,\ldots,n-2} (\widehat{p_{{\mathcal{T}}_j}})_{l|_{{\mathcal{T}}_j}}}{\prod_{j=1,\ldots,n-3} (\widehat{p_{e_j}})_{l|_{e_j}}},
\end{equation}
where $\widehat{p_{e_j}}$ is the normalized $u_{e_j}$, and $\widehat{p_{{\mathcal{T}}_j}}$ is the maximum likelihood estimate for the tree $\mathcal{T}_j$ and the data vector $u_{{\mathcal{T}}_j}$.
\end{corollary}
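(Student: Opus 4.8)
The plan is to prove both claims by induction on the number $n$ of leaves of $\mathcal{T}$, using the description of $I_{\mathcal{T}}$ as an iterated codimension-zero toric fiber product together with the multiplicativity established in Theorem~\ref{theorem:ML_degree_of_toric_fiber_products}. The base case is the tripod $n=3$: here $V_{\mathcal{T}}=\mathbb{P}^3$ is the saturated model, whose degree is one, so by the degree bound its ML degree is one and the likelihood function attains its unique critical point at the empirical distribution $\hat{p}=u/u_{+}$. This settles the tripod formula directly.

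For the inductive step with $n\geq 4$, I would fix an inner edge $e$ and write $\mathcal{T}$ as the union of two $3$-valent subtrees $\mathcal{T}_1$ and $\mathcal{T}_2$ sharing $e$, with $n_1$ and $n_2$ leaves respectively, where $n_1+n_2=n+2$ and $3\leq n_1,n_2<n$. As recalled in the example preceding Proposition~\ref{theorem:toric_fiber_product_generators}, taking $\mathcal{A}=\{(0,1),(1,0)\}$ to index the two labelings of $e$ exhibits $I_{\mathcal{T}}$ as the toric fiber product $I_{\mathcal{T}_1}\times_{\mathcal{A}}I_{\mathcal{T}_2}$. Since the two vectors of $\mathcal{A}$ are linearly independent, Theorem~\ref{theorem:ML_degree_of_toric_fiber_products} applies and gives $\mldeg(V_{\mathcal{T}})=\mldeg(V_{\mathcal{T}_1})\cdot\mldeg(V_{\mathcal{T}_2})=1\cdot 1=1$ by the inductive hypothesis.

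For the MLE formula I would unfold the critical-point description~\eqref{formula:critical_point_of_the_likelihood_function}. The theorem yields $\hat{p}^i_{jk}=(\widehat{p_{\mathcal{T}_1}})^i_j\,(\widehat{p_{\mathcal{T}_2}})^i_k/(\overline{p_{\mathcal{A}}})^i$, and by the linear independence of $\mathcal{A}$ the factor $\overline{p_{\mathcal{A}}}$ is the normalized $u_{\mathcal{A}}$, i.e. exactly the normalized edge marginal $\widehat{p_e}$. Translating to edge-labelings $l$ of $\mathcal{T}$, so that $i,j,k$ encode the common label $l|_e$ and the restrictions $l|_{\mathcal{T}_1},l|_{\mathcal{T}_2}$, this reads $\hat{p}_l=(\widehat{p_{\mathcal{T}_1}})_{l|_{\mathcal{T}_1}}(\widehat{p_{\mathcal{T}_2}})_{l|_{\mathcal{T}_2}}/(\widehat{p_e})_{l|_e}$. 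Substituting the inductive product formulas for $\widehat{p_{\mathcal{T}_1}}$ and $\widehat{p_{\mathcal{T}_2}}$ and collecting factors then produces the claimed expression~\eqref{equation:MLE_phylogenetic_models}.

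The main work is the combinatorial bookkeeping of the recursion. I must check that cutting $e$ partitions the $n-2$ tripods of $\mathcal{T}$ into the tripods of $\mathcal{T}_1$ and of $\mathcal{T}_2$, and partitions the $n-3$ inner edges into the inner edges of $\mathcal{T}_1$, the inner edges of $\mathcal{T}_2$, and $e$ itself, noting that $e$ becomes a leaf edge, hence non-inner, in each subtree. The counts $(n_1-2)+(n_2-2)=n-2$ and $(n_1-3)+(n_2-3)+1=n-3$ confirm that after substitution each tripod contributes exactly one numerator factor and each inner edge exactly one denominator factor, with $e$ supplying the single extra denominator $\widehat{p_e}$. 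The remaining points are routine: the restriction identities $(l|_{\mathcal{T}_1})|_{\mathcal{T}_j}=l|_{\mathcal{T}_j}$ for tripods and edges lying in $\mathcal{T}_1$ are immediate, and positivity and existence of the intermediate maximum likelihood estimates are guaranteed by the final paragraph of Theorem~\ref{theorem:ML_degree_of_toric_fiber_products}.
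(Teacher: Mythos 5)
Your proof is correct and takes essentially the same route the paper intends: the corollary is stated as an immediate consequence of Theorem~\ref{theorem:ML_degree_of_toric_fiber_products}, using the decomposition of $I_{\mathcal{T}}$ at an inner edge as a codimension-zero toric fiber product over $\mathcal{A}=\{(0,1),(1,0)\}$ (set up in the example preceding Proposition~\ref{theorem:toric_fiber_product_generators}), with the tripod $V_{\mathcal{T}}=\mathbb{P}^3$ as base case. Your induction, including the counts $(n_1-2)+(n_2-2)=n-2$ and $(n_1-3)+(n_2-3)+1=n-3$ and the identification of $\overline{p_{\mathcal{A}}}$ with the normalized edge marginal $\widehat{p_e}$, supplies exactly the bookkeeping the paper leaves implicit.
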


The ML degree of a group-based phylogenetic model in probability coordinates is not known to be related to the ML degree in the Fourier coordinates. In particular, the ML degree can be much larger in the probability coordinates than the one in the Fourier coordinates~\cite[Section 6]{Hosten_etal}. In probability coordinates, numerical methods are needed already in the smallest cases to determine the maximum likelihood estimate and the critical points of the likelihood function~\cite{kosta2019maximum}.

\begin{example}
Let $\mathcal{T}$ be the 3-valent four leaf tree, let $\mathcal{T}_1$ and $\mathcal{T}_2$ be the tripods contained in $\mathcal{T}$, and let $e$ be the inner edge of $\mathcal{T}$. We consider the data vector $$u=(u_{00000},u_{11000},u_{00011},u_{11011},u_{10110},u_{10101},
u_{01110},u_{01101})=(17,5,27,5,16,5,19,6)$$
with a total of $100$ observations. Then
\begin{align*}
u_{{\mathcal{T}}_1}&=(u_{000++},u_{110++},u_{101++},u_{011++})=(44,10,21,25),\\
u_{{\mathcal{T}}_2}&=(u_{++000},u_{++110},u_{++101},u_{++011})=(22,35,11,32),\\
u_e&=(u_{++0++},u_{++1++})=(54,46).
\end{align*}
Since the $V_{{\mathcal{T}}_1}=V_{{\mathcal{T}}_2}=\mathbb{P}^3$, we have
$$
\widehat{p_{{\mathcal{T}}_1}}=(\frac{44}{100},\frac{10}{100},\frac{21}{100},\frac{25}{100}), \widehat{p_{{\mathcal{T}}_2}}=(\frac{22}{100},\frac{35}{100},\frac{11}{100},\frac{32}{100}) \text{ and } \widehat{p_e\,}=(\frac{54}{100},\frac{46}{100}).
$$
Then by~(\ref{equation:MLE_phylogenetic_models})
$$
\hat{p}_{00000}=\frac{(\widehat{p_{{\mathcal{T}}_1}})_{000} (\widehat{p_{{\mathcal{T}}_2}})_{000}}{(\widehat{p_e\,})_0}=\frac{44\cdot 22 \cdot 100}{100^2 \cdot 54}=\frac{121}{675}.
$$
Similarly, we can find other coordinates of the maximum likelihood estimate. This gives
$$
\hat{p}=\left(\frac{121}{675},\frac{11}{270},\frac{176}{675},\frac{8}{135},\frac{147}{920},\frac{231}{4600},\frac{35}{184},\frac{11}{184}\right).
$$
We obtain the same result when using Birch's theorem.
\end{example}

Recent work on rational maximum likelihood estimators establishes that a class of tree models known as staged trees have ML degree 1~\cite[Proposition 12]{duarte2019discrete}. In light of Corollary~\ref{corollary:ML_degree_phylogenetic_models}, it is natural to ask if there is any relation between staged tree models and 3-valent phylogenetic tree models. We find that this is the case in the proposition below. In fact, we believe that any codimension zero toric fiber product can be viewed as a generalized staged tree and this is left as a future research direction.  Conversely, Ananiadi and Duarte study when staged trees are codimension-0 toric fiber products in the recent paper~\cite{ananiadi2019gr}. 

Consider a rooted tree $\mathcal{T}$ with at least two edges emanating from every non-leaf vertex of $\mathcal{T}$. Consider a labeling of the edges of $\mathcal{T}$ by the elements of a set $S$. The \textit{floret} associated with a vertex $v$  is the multiset of labels of edges emanating from $v$. The tree $\mathcal{T}$ is called a \textit{staged tree} if any two florets are equal or disjoint. The set of florets is denoted by $F$.

\begin{definition}[Definition 10 in~\cite{duarte2019discrete}] \label{def:stagedtree}
Let $J$ denote the set of all paths from root to leaf in $\mathcal{T}$. For a path $j \in J$ and a label $s \in S$, let $\mu_{sj}$ denote the number of times an edge labeled by $s$ appears on the path $j$. A \textit{staged tree model} is the image of the parameter space
$$
\Theta = \left\{ \left(\theta_s\right)_{s \in S} \in \left(0,1\right)^S:\sum_{s \in f} \theta_s=1 \text{ for all } f\in F \right\}
$$
under the map $p_j = \prod \theta_s ^{\mu_{sj}}$.
\end{definition}

\begin{proposition} \label{prop:stagedphyl}
All 3-valent phylogenetic tree models as defined in Definition~\ref{def:phylogenetic_models} are staged tree models.
\end{proposition}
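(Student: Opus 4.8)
The plan is to construct, for each 3-valent tree $\mathcal{T}$, an explicit staged tree whose model coincides with $V_{\mathcal{T}}$, and then to verify the two defining features: the equal-or-disjoint floret condition and the equality of the two parametrized models. First I would root $\mathcal{T}$ at one of its leaves $\rho$ and build an \emph{event tree} recording a sequential choice of the edge labels of $\mathcal{T}$ in an order refining the rooted-tree partial order. The first decision picks the label $l_{e_0} \in \{0,1\}$ of the pendant edge $e_0$ at $\rho$; the remaining decisions are indexed by the interior vertices $v$ of $\mathcal{T}$: at $v$, with parent edge $p$ already labeled and children edges $g,h$ still to be labeled, the parity condition $l_g + l_h \equiv l_p \pmod 2$ leaves exactly two admissible choices of $(l_g,l_h)$. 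Thus every non-leaf node of the event tree has exactly two outgoing edges, each edge of $\mathcal{T}$ is decided exactly once, and the root-to-leaf paths biject with the $2^{n-1}$ admissible labelings of $\mathcal{T}$.

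Next I would set up the florets. The crucial observation is that the admissible pair at $v$ depends only on the parent parity $\pi = l_p$: for $\pi=0$ the options are $(l_g,l_h)\in\{(0,0),(1,1)\}$ and for $\pi=1$ they are $(l_g,l_h)\in\{(0,1),(1,0)\}$. I would therefore introduce one floret $f_{v,\pi}$ for each pair $(v,\pi)$, together with a single floret $f_0$ for the initial choice at $e_0$, assigning a distinct label in $S$ to each of the four local states at each $v$. By construction two nodes of the event tree carry identical florets exactly when they share the same $(v,\pi)$, while florets of different type use disjoint labels; hence any two florets are equal or disjoint and every non-leaf vertex has two emanating edges, so the event tree is a staged tree. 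Counting florets gives $1+2(n-2)=2n-3$, so the staged tree model has dimension $2n-3$.

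It then remains to identify this staged tree model with $V_{\mathcal{T}}\cap\Delta$. Writing the parametrization as $p_l = \rho(l_{e_0})\prod_v \gamma_v(l_p,l_g,l_h)$, where each factor depends only on the local state at the corresponding vertex, I would show the image lies in $V_{\mathcal{T}}$ by checking that every generator of $I_{\mathcal{T}}$ vanishes on it. By Proposition~\ref{theorem:toric_fiber_product_generators}, applied recursively along the decomposition of $\mathcal{T}$ into tripods, $I_{\mathcal{T}}$ is generated by $\mathrm{Lift}(F)\cup\mathrm{Lift}(G)\cup\mathrm{Quad}$. For each such binomial the two monomials differ by a swap that preserves, at every vertex $v$, the \emph{multiset} of local states $(l_p,l_g,l_h)$: the lifted binomials alter only one side of the gluing edge while fixing the other, and the $\mathrm{Quad}$ binomials merely exchange the two subtree-parts attached at a fixed gluing label. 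Since each $\gamma_v$ (and $\rho$, through the parent parity of the first interior vertex) depends only on the local state, matching local multisets forces the two monomials to take equal values, so the binomial vanishes; this is an induction on the toric fiber product decomposition, with the tripod $I_{\mathcal{T}}=\langle 0\rangle$ as base case. As $V_{\mathcal{T}}$ is irreducible of dimension $2n-3$ and the staged tree model is the irreducible image of a $(2n-3)$-dimensional parameter space, the containment together with equality of dimensions yields the identification.

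I expect the main obstacle to be this last step, proving equality rather than mere containment. The subtlety is that the local factors $\gamma_v$ are \emph{not} monomials in edge parameters (in general $\gamma_v(0,0)\gamma_v(1,1)\neq\gamma_v(0,1)\gamma_v(1,0)$), so the staged tree parametrization differs point by point from the toric parametrization $p_l = s\prod_e \theta_e^{l_e}$ and cannot be matched by a naive reparametrization; one must argue through vanishing of $I_{\mathcal{T}}$ on the image together with the dimension count. Establishing the local-multiset-preservation property uniformly across all three families of generators, and confirming that the parametrization attains full dimension $2n-3$ (so that the containment is an equality), are the two places requiring care.
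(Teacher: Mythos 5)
Your construction is essentially the paper's: the stages are the tripods of $\mathcal{T}$ (your decisions at interior vertices), the florets are indexed by the two possible labels of the already-determined parent/shared edge, and the parameters are conditional probabilities of tripod labelings given that edge label; the only deviation is cosmetic, in that you root at a leaf and realize the first tripod by two binary stages where the paper uses a single initial floret with four edges. The verification you sketch (vanishing of the toric fiber product generators on the image plus a dimension count) goes beyond the paper's own proof, which stops at the construction and the identification of parameters with marginal/conditional probabilities, and the two gaps you flag are routine to close: the parameters are recoverable from any positive image point as conditional probabilities (so the image has full dimension $2n-3$), and the inclusion needed for set-level rather than Zariski-closure equality follows by factoring any positive point of $V_{\mathcal{T}}$ through its tripod marginals and shared-edge conditionals.
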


\begin{proof}
The staged tree begins with a tripod that can be chosen arbitrarily. The first stage has 4 edges corresponding to the four labelings of this tripod. The tripod corresponding to any subsequent stage must share an edge with a tripod corresponding to a previous stage. The florets are binary and correspond to the two possible labelings of the common edge. The parameters $\theta_s$ for a given stage are marginal probabilities for the tripod divided by the marginal probabilities for the edge shared with a tripod corresponding to a previous stage. In this way, the staged tree corresponding to a phylogenetic model on a 3-valent $n$-leaf tree has one stage of edges for every tripod in the $n$-leaf tree.
\end{proof}

\begin{example}
The staged trees corresponding to the phylogenetic models on the tripod and the 3-valent 4-leaf tree are depicted in Figures~\ref{figure:staged_tree1} and~\ref{figure:staged_tree2}. The vertices filled with the same color have the same florets. Unfilled vertices have all different florets. In Figure~\ref{figure:staged_tree2}, the parameters $\theta_i$ are equal to
\begin{align*}
\theta_1 &= p_{000++}, &\theta_2 &= p_{110++}, &\theta_3 &= p_{101++}, &\theta_4 &= p_{011++},\\
\theta_5 &= \frac{p_{++000}}{p_{++0++}}, &\theta_6 &= \frac{p_{++011}}{p_{++0++}}, &\theta_7 &= \frac{p_{++110}}{p_{++1++}}, &\theta_8 &=\frac{p_{++101}}{p_{++1++}}.
\end{align*}

\begin{figure}
\centering
\begin{minipage}{.5\textwidth}
  \centering
  \includegraphics[height=5cm]{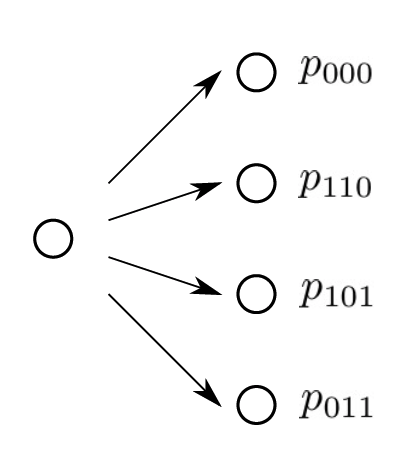}
  \captionof{figure}{Staged tree for the tripod}
  \label{figure:staged_tree1}
\end{minipage}%
\begin{minipage}{.5\textwidth}
  \centering
  \includegraphics[height=5cm]{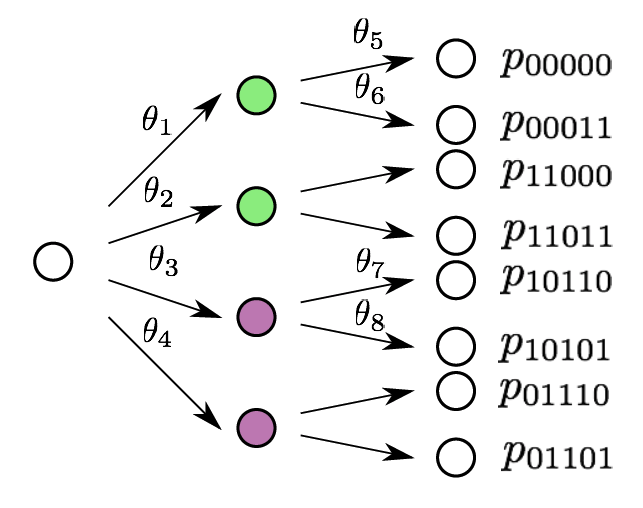}
  \captionof{figure}{Staged tree for the $4$-leaf tree}
  \label{figure:staged_tree2}
\end{minipage}
\end{figure}

\end{example}

Next, we study the ML degree drop for small phylogenetic models. One can see from Table~\ref{table:phylogenetic_models} that if $\mathcal{T}$ is a 3-valent tree with at least four leaves then the degree of the phylogenetic variety is strictly larger than the sum of degrees of the components of the intersection $\mathcal{L}'_{c,u}$ of the phylogenetic variety with the affine space defined in~(\ref{equations:linear_subspace}). This implies that this intersection is not generically transverse. Since $c=(1,1,\dots,1)$ in this example, we drop the subscript from $L_c$ and $L_{c,i}$.

In the case of the $4$-leaf tree, the intersection of $V_{\mathcal{T}}$ and the linear subspace defined by $L(p)=L_1(p)=\ldots=L_5(p)=0$ has one component of dimension $1$ and degree $1$. It is the same component as in Table~\ref{table:phylogenetic_models} that does not contribute to the ML degree. Since the intersection is not generically transverse, the degree $1$ of this component gives only a  lower bound to the difference $\deg V_{\mathcal{T}} - \mldeg V_{\mathcal{T}}=3$.

In the case of the $5$-leaf tree, the intersection of $V_{\mathcal{T}}$ and the linear subspace defined by $L(p)=L_1(p)=\ldots=L_7(p)=0$ has three components each of dimension $3$ and degrees $1,3,3$. These components are the three components in Table~\ref{table:phylogenetic_models} that do not contribute to the ML degree of $V_{\mathcal{T}}$. Since the intersection is not generically transverse, the sum $1+3+3=7$ of degrees of the components gives only a lower bound to the difference $\deg V_{\mathcal{T}} - \mldeg V_{\mathcal{T}}=33$.

In both cases, the intersection is transverse only at the zero-dimensional component of degree 1 that gives the MLE.

\begin{table}[ht]
\centering
\begin{tabular}{|c|c|c|c|c|c|c|c|}
\hline
tree &  dim & deg & dim int. & deg int. & \# comp. & (dim, deg) comp. & $\sum$deg comp. \\
\hline \hline
 tripod & 3 & 1 & 0 & 1 & 1 & (0,1) & 1 \\
 \hline
 4-leaf & 5 & 4 & 1 & 1 & 2 & (0,1),(1,1) & 2\\
 \hline
 5-leaf & 7 & 34 & 3 & 7 & 4 & (0,1),(3,1),(3,3),(3,3) & 8\\
 \hline
\end{tabular}
\caption{Properties of phylogenetic ideals on 3-valent trees. The data presented in this table are: the dimension of the variety $V_{\mathcal{T}}$; the degree of the variety $V_{\mathcal{T}}$; the dimension of the intersection of $V_{\mathcal{T}}$ and the linear subspace defined by $L(p)=L_1(p)=\ldots=L_{d-1}(p)=0$; the degree of the intersection of $V_{\mathcal{T}}$ and the linear subspace defined by $L(p)=L_1(p)=\ldots=L_{d-1}(p)=0$; the number of irreducible components of this intersection; the dimension and the degree of each such irreducible component in the form (dim, deg); the sum of the degrees of the irreducible components.}
\label{table:phylogenetic_models}
\end{table}

Huh~\cite{Huh} proved that if the MLE of a statistical model is a rational function of the data, then it has to be an alternating product of linear forms of specific form. In particular, the MLE is given by
$$
\hat{p}_j = \lambda_j \prod_{i=1}^m (\sum_{k=1}^{n+1} h_{ik} u_k)^{h_{ij}} \text{ ,}
$$
where $\lambda=(\lambda_0,\lambda_1,\ldots,\lambda_n)$ and $H$ is an $m \times (n+1)$ integer matrix whose columns sum to zero. Such a map is called Horn uniformization and the matrix $H$ is called a Horn matrix. Duarte, Marigliano and Sturmfels~\cite[Theorem 1]{duarte2019discrete} proved that then there exists a determinantal triple $(A,\Delta,\bf{m})$ such that the statistical model is the image under the monomial map $\phi_{\Delta,\bf{m}}$ of an orthant of the dual of the toric variety $Y_{A}$.

\begin{example} \label{example:phylogenetic_Huh}
Let $\mathcal{T}$ be the $3$-valent $4$-leaf tree. Then $\lambda=(1,1,\ldots,1)$ and
$$
H = \kbordermatrix{
& p_{00000} & p_{00011} & p_{11000} & p_{11011} & p_{10110}& p_{10101} &
p_{01110} & p_{01101}\\
000++ & 1 & 1 & 0 & 0 & 0 & 0 & 0 & 0 \\
110++ & 0 & 0 & 1 & 1 & 0 & 0 & 0 & 0 \\
101++ & 0 & 0 & 0 & 0 & 1 & 1 & 0 & 0 \\
011++ & 0 & 0 & 0 & 0 & 0 & 0 & 1 & 1 \\
+++++ & -1 & -1 & -1 & -1 & -1 & -1 & -1 & -1\\
++000 & 1 & 0 & 1 & 0 & 0 & 0 & 0 & 0 \\
++011 & 0 & 1 & 0 & 1 & 0 & 0 & 0 & 0 \\
++110 & 0 & 0 & 0 & 0 & 1 & 0 & 1 & 0 \\
++101 & 0 & 0 & 0 & 0 & 0 & 1 & 0 & 1 \\
++0++ & -1 & -1 & -1 & -1 & 0 & 0 & 0 & 0 \\
++1++ & 0 & 0 & 0 & 0 & -1 & -1 & -1 & -1
}.
$$
If $\mathcal{T}$ is a $3$-valent $n$-leaf tree, then $H$ has $6(n-2)-1$ rows and $2^{n-1}$ columns. Each column contains $n-2$ ones and the same number of minus ones, all other entries are zeros. The vector $\lambda$ has all its entries equal to $(-1)^{n-2}$.

The rows of the matrix
\setcounter{MaxMatrixCols}{11}
$$
A=\begin{bmatrix}
1 & 1 & 1 & 1 & 1 & 0 & 0 & 0 & 0 & 0 & 0 \\
0 & 0 & 0 & 0 & 0 & 1 & 1 & 0 & 0 & 1 & 0\\
0 & 0 & 0 & 0 & 0 & 0 & 0 & 1 & 1 & 0 & 1\\
2 & 2 & 1 & 1 & 1 & 0 & 0 & 1 & 1 & 1 & 1\\
1 & 1 & 2 & 2 & 1 & 1 & 1 & 0 & 0 & 1 & 1
\end{bmatrix}
$$
give a basis of the left kernel of the Horn matrix $H$. The discriminant
$$
\Delta_A =  -x_5x_{10}x_{11}+x_1x_6x_{11}+x_1x_7x_{11}+x_2x_6x_{11}+x_2x_7x_{11}+x_3x_8x_{10}+x_3x_9x_{10}+x_4x_8x_{10}+x_4x_9x_{10}
$$
vanishes on the dual of the toric variety
\begin{scriptsize}
$$
Y_A =\overline{\left\{ \left(t_1 t_4^2 t_5 : t_1 t_4^2 t_5 : t_1 t_4 t_5^2 : t_1 t_4 t_5^2 : t_1 t_4 t_5 : t_2 t_5 : t_2 t_5 : t_3 t_4 : t_3 t_4 : t_2 t_4 t_5:t_3 t_4 t_5 \right) \in \R \mathbb{P}^3: t_1,t_2,t_3,t_4,t_5 \in \R^* \right\}}.
$$
\end{scriptsize}
The toric variety $Y_A$ is of dimension $4$ and degree $4$.
Let $\textbf{m}=-x_5 x_{10} x_{11}$. Then
$$
\frac{1}{\textbf{m}} \Delta_A = 1 - \frac{x_1 x_6}{x_5 x_{10}} - \frac{x_1 x_7}{x_5 x_{10}} - \frac{x_2 x_6}{x_5 x_{10}} - \frac{x_2 x_7}{x_5 x_{10}} - \frac{x_3 x_8}{x_5 x_{11}} - \frac{x_3 x_9}{x_5 x_{11}} - \frac{x_4 x_8}{x_5 x_{11}} -\frac{x_4 x_9}{x_5 x_{11}}.
$$
This gives the monomial map
$$
\phi_{(\Delta_A,\textbf{m})} = \left( \frac{x_1 x_6}{x_5 x_{10}} , \frac{x_1 x_7}{x_5 x_{10}} , \frac{x_2 x_6}{x_5 x_{10}} , \frac{x_2 x_7}{x_5 x_{10}} , \frac{x_3 x_8}{x_5 x_{11}} , \frac{x_3 x_9}{x_5 x_{11}} , \frac{x_4 x_8}{x_5 x_{11}} ,\frac{x_4 x_9}{x_5 x_{11}} \right).
$$
The model $\sM$ is the image of
$$
Y^*_{A,\sigma} = \left\{ x \in Y^*_A: x_1,x_2,x_3,x_4,x_6,x_7,x_8,x_9>0, x_5,x_{10},x_{11}<0 \right\}
$$
under the map $\phi_{(\Delta_A,\textbf{m})}$. The maximum likelihood estimator is given by $\phi_{(\Delta_A,\textbf{m})} \circ H:\Delta_7 \rightarrow \sM$.
\end{example}

If the `3-valent' hypothesis is dropped, the ML degree does not need to be one. We conclude with an example of a toric fiber product where the ML degree is greater than one.

\begin{example}
Consider the tree $\mathcal{T}$ with five leaves that has two inner nodes of degrees three and four. Then $\mathcal{T}$ is the union of a tripod $\mathcal{T}_1$ and a four-leaf claw tree $\mathcal{T}_2$ with two edges identified. The ideal $I_{\mathcal{T}}$ is a toric fiber product of $I_{{\mathcal{T}}_1}$ and $I_{{\mathcal{T}}_2}$. The ML degree of $I_{{\mathcal{T}}_1}$ is $1$ by Corollary~\ref{corollary:ML_degree_phylogenetic_models}. The ideal of $I_{{\mathcal{T}}_2}$ is generated by
$$
x_{1001}x_{0110}-x_{0000}x_{1111},x_{0101}x_{1010}-x_{0000}x_{1111},x_{1100}x_{0011}-x_{0000}x_{1111}.
$$
It is an ideal of codimension $3$ and degree $8$. Its maximum likelihood degree is $5$. Hence also the ML degree of $I_{\mathcal{T}}$ is $5$ by Theorem~\ref{theorem:ML_degree_of_toric_fiber_products}.

Similarly, if $\mathcal{T}$ is the six-leaf tree with two inner nodes of degree four then the ML degree of $I_{\mathcal{T}}$ is $25$.
\end{example}

\noindent

\bibliography{bibl}
\bibliographystyle{plain}

\bigskip \bigskip

\noindent
\footnotesize {\bf Authors' addresses:}

\begin{multicols}{3}
\smallskip
\noindent
Carlos Am\'endola\\
Department of Mathematics,\\
Technische Universit\"at M\"unchen,\\
 Munich 85748, Germany.\\
\texttt{carlos.amendola@tum.de}
\vfill\null
\columnbreak

\noindent
Dimitra Kosta\\
School of Mathematics\\ and Statistics,\\
University of Glasgow,\\
Glasgow G12 8SQ, UK.\\
\texttt{Dimitra.Kosta@glasgow.ac.uk}
\vfill\null
\columnbreak

\noindent
Kaie Kubjas\\
Department of Mathematics and Systems Analysis,\\
Aalto University,\\
FI-00076 Aalto, Finland.\\
\texttt{kaie.kubjas@aalto.fi}

\end{multicols}

\end{document}